\documentclass{amsart}
\usepackage{amsfonts,amssymb,amsmath,amsthm}
\usepackage{url}
\usepackage{enumerate}
\setlength{\paperwidth}{21cm} \setlength{\paperheight}{29.7cm}
\setlength{\evensidemargin}{1cm} \setlength{\oddsidemargin}{1cm}
\setlength{\topmargin}{-2.5cm} \setlength{\headsep}{1.2cm}
\setlength{\headheight}{2.5cm} \setlength{\textheight}{21cm}
\setlength{\textwidth}{14.7cm}
\vfuzz2pt 
\hfuzz2pt 
\newtheorem{thm}{Theorem}[section]
\newtheorem{cor}[thm]{Corollary}
\newtheorem{lem}[thm]{Lemma}
\newtheorem{prop}[thm]{Proposition}
\theoremstyle{definition}

\theoremstyle{remark}

\newtheorem{exe}[thm]{\bf Example}
\numberwithin{equation}{section}


\begin{document}
\title[Hypercyclic abelian semigroups of affine maps on $\mathbb{C}^{n}$ ]{Hypercyclic abelian semigroups \\ of affine maps on $\mathbb{C}^{n}$ }

\author{Yahya N'dao}

 \address{Yahya N'dao, University of Moncton, Department of mathematics and statistics, Canada}
 \email{yahiandao@yahoo.fr ; yahiandao@voila.fr}

\subjclass[2000]{37C85, 47A16}

\keywords{affine, hypercyclic, dense, orbit, abelian semigroup}

\begin{abstract}

We give a characterization of hypercyclic abelian semigroup
$\mathcal{G}$ of affine maps on $\mathbb{C}^{n}$. If $\mathcal{G}$
is finitely generated, this characterization is explicit. We prove
in particular
 that no abelian group generated by $n$ affine maps on $\mathbb{C}^{n}$ has a dense orbit.
\end{abstract}

\maketitle

\section{ Introduction}

Let $M_{n}(\mathbb{C})$ be the set of all square matrices of order
$n\geq 1$  with entries in  $\mathbb{C}$ and $GL(n, \ \mathbb{C})$
be the group of all invertible matrices of $M_{n}(\mathbb{C})$. A
map $f: \ \mathbb{C}^{n}\longrightarrow \mathbb{C}^{n}$  is called
an affine map if there exist $A\in M_{n}(\mathbb{C})$  and  $a\in
\mathbb{C}^{n}$  such that  $f(x)= Ax+a$,  $x\in \mathbb{C}^{n}$.
We denote  $f= (A,a)$, we call  $A$ the \textit{linear part} of
$f$. The map $f$ is invertible if $A\in GL(n, \mathbb{C})$. Denote
by $MA(n, \ \mathbb{C})$  the vector space of all affine maps on
$\mathbb{C}^{n}$ and $GA(n, \ \mathbb{C})$ the group of all
invertible affine maps of $MA(n, \mathbb{C})$.

Let  $\mathcal{G}$  be an abelian affine sub-semigroup of $MA(n, \
\mathbb{C})$. For a vector  $v\in \mathbb{C}^{n}$,  we consider
the orbit of  $\mathcal{G}$  through  $v$: $ \mathcal{G}(v) =
\{f(v): \ f\in \mathcal{G}\} \subset \mathbb{\mathbb{C}}^{n}$.
Denote by $\overline{E}$
 the closure of a subset $E\subset \mathbb{C}^{n}$.
The group $\mathcal{G}$ is called \textit{hypercyclic} if there
exists a vector $v\in {\mathbb{C}}^{n}$ such that
$\overline{\mathcal{G}(v)}={\mathbb{C}}^{n}$. For an account of
results and bibliography on hypercyclicity, we refer to the book
\cite{bm} by Bayart and Matheron.
\
\\
 Let $n\in\mathbb{N}_{0}$  be fixed, denote by:
\\
\textbullet \ $\mathbb{C}^{*}= \mathbb{C}\backslash\{0\}$,
$\mathbb{R}^{*}= \mathbb{R}\backslash\{0\}$  and $\mathbb{N}_{0}=
\mathbb{N}\backslash\{0\}$.
\
\\
\textbullet \;  $\mathcal{B}_{0} = (e_{1},\dots,e_{n+1})$ the
canonical basis of $\mathbb{C}^{n+1}$  and   $I_{n+1}$  the
identity matrix of $GL(n+1,\mathbb{C})$.
\
\\
For each $m=1,2,\dots, n+1$, denote by:
\\
\textbullet \; $\mathbb{T}_{m}(\mathbb{C})$ the set of matrices
over $\mathbb{C}$ of the form \begin{align}\begin{bmatrix}
  \mu & \ &  \ & 0 \\
  a_{2,1} & \mu &  \ &  \\\
  \vdots &  \ddots & \ddots & \ \\
  a_{m,1} & \dots & a_{m,m-1} & \mu
\end{bmatrix}& \  \label{eq1}
\end{align}
\\
\textbullet \; $\mathbb{T}_{m}^{\ast}(\mathbb{C})$  the group of
matrices of the form (~\ref{eq1}) with $\mu\neq 0$.
\
\\
Let  $r\in \mathbb{N}$ and $\eta
=(n_{1},\dots,n_{r})\in\mathbb{N}^{r}_{0}$ such that $n_{1}+\dots
+ n_{r}=n+1. $ In particular, $r\leq n+1$. Write
\
\\
\textbullet \; \ $\mathcal{K}_{\eta,r}(\mathbb{C}): =
\mathbb{T}_{n_{1}}(\mathbb{C})\oplus\dots \oplus
\mathbb{T}_{n_{r}}(\mathbb{C}).$ In particular if  $r=1$, then
$\mathcal{K}_{\eta,1}(\mathbb{C}) = \mathbb{T}_{n+1}(\mathbb{C})$
and  $\eta=(n+1)$.
\
\\
\textbullet \; $\mathcal{K}^{*}_{\eta,r}(\mathbb{C}): =
\mathcal{K}_{\eta,r}(\mathbb{C})\cap \textrm{GL}(n+1, \
\mathbb{C})$.\
\
\\
\textbullet \; $u_{0} = (e_{1,1},\dots,e_{r,1})\in
\mathbb{C}^{n+1}$ where
 $e_{k,1} = (1,0,\dots,
0)\in \mathbb{C}^{n_{k}}$, for $k=1,\dots, r$. So
$u_{0}\in\{1\}\times\mathbb{C}^{n}$.
\\
\textbullet \; $p_{2}:\mathbb{C}\times
\mathbb{C}^{n}\longrightarrow\mathbb{C}^{n}$ the second projection
defined by $p_{2}(x_{1},\dots,x_{n+1})=(x_{2},\dots,x_{n+1})$.\
\\
 \textbullet \; $e^{(k)} = (e^{(k)}_{1},\dots,  e^{(k)}_{r})\in \mathbb{C}^{n+1}$ where $$e^{(k)}_{j} = \left\{\begin{array}{c}
  0\in \mathbb{C}^{n_{j}}\ \ \mathrm{if}\ \ j\neq k \\
  e_{k,1}\ \ \ \ \ \ \ \ \mathrm{if}\ \ j = k  \\
\end{array}\right. \ \ \ \ \ \  for \ \ every\ \ 1\leq j,\ k\leq r.$$
\\
\textbullet \; $\textrm{exp} :\ \mathbb{M}_{n+1}(\mathbb{C})
\longrightarrow\textrm{GL}(n+1, \mathbb{C})$  is the matrix
exponential map; set $\textrm{exp}(M) = e^{M}$, $M\in
M_{n+1}(\mathbb{C})$.\
\\
\textbullet \; Define the map \  \  \ $\Phi\ :\ GA(n,\
\mathbb{C})\ \longrightarrow\
 GL(n+1,\ \mathbb{C})$ \\
  $$f =(A,a) \ \longmapsto\ \begin{bmatrix}
                     1  & 0 \\
                     a & A
                 \end{bmatrix}$$
We have the following composition formula
 $$\begin{bmatrix}
                     1  & 0 \\
                     a & A
                 \end{bmatrix}\begin{bmatrix}
                     1  & 0 \\
                     b & B
                 \end{bmatrix} = \begin{bmatrix}
                     1  & 0 \\
                     Ab+a & AB
                 \end{bmatrix}.$$
 Then  $\Phi$  is an injective homomorphism of groups.\ Write
  \
\\
\textbullet \; $G=\Phi(\mathcal{G})$, it is an abelian
sub-semigroup of $GL(n+1, \mathbb{C})$.
\\
\textbullet \;  Define the map \ \ \ $\Psi\ :\ MA(n,\ \mathbb{C})\
\longrightarrow\
 M_{n+1}(\mathbb{C})$
$$f =(A,a) \ \longmapsto\ \begin{bmatrix}
                     0  & 0 \\
                     a & A
                 \end{bmatrix}$$
We can see that   $\Psi$  is injective and linear. Hence
$\Psi(MA(n, \mathbb{C}))$ is a vector subspace of
$M_{n+1}(\mathbb{C})$. We prove (see Lemma~\ref{L:10000001}) that
$\Phi$ and $\Psi$ are related by the following property
$$\textrm{exp}(\Psi(MA(n, \mathbb{C})))=\Phi(GA(n, \mathbb{C})).$$

Let consider the normal form of $\mathcal{G}$: By Proposition
~\ref{p:2}, there exists a $P\in \Phi(\textrm{GA}(n, \mathbb{C}))$
and a partition $\eta$ of $(n+1)$ such that $G'=P^{-1}GP\subset
\mathcal{K}^{*}_{\eta,r}(\mathbb{C})\cap\Phi(MA(n,\mathbb{C}))$.
For such a choice of matrix $P$,  we let
\
\\
\textbullet \; $v_{0} = Pu_{0}$. So $v_{0}\in
\{1\}\times\mathbb{C}^{n}$, since
  $P\in\Phi(GA(n, \mathbb{C}))$.
 \\
 \textbullet \; $w_{0} = p_{2}(v_{0})\in\mathbb{C}^{n}$. We have $v_{0}=(1, w_{0})$.\
 \\
\textbullet \; $\varphi=\Phi^{-1}(P)\in MA(n,\mathbb{C})$.
 \\
\textbullet\  $\mathrm{g} = \textrm{exp}^{-1}(G)\cap \left(
P(\mathcal{K}_{\eta,r}(\mathbb{C}))P^{-1}\right)$. If $G\subset
\mathcal{K}^{*}_{\eta,r}(\mathbb{C})$, we have $P=I_{n+1}$ and
$\mathrm{g} = \textrm{exp}^{-1}(G)\cap
\mathcal{K}_{\eta,r}(\mathbb{C})$.
\
\\
\textbullet\  $\mathrm{g}^{1} = \mathrm{g}\cap
\Psi(MA(n,\mathbb{C}))$. It is an additive sub-semigroup of
$M_{n+1}(\mathbb{C})$ (because by  Lemma ~\ref{LL:002},
$\mathrm{g}$ is an additive sub-semigroup of
$M_{n+1}(\mathbb{C})$).
\
\\
\textbullet\  $\mathrm{g}^{1}_{u} = \{Bu: \ B\in
\mathrm{g}^{1}\}\subset \mathbb{C}^{n+1}, \ \
u\in\mathbb{C}^{n+1}.$
\
\\
\textbullet\   $\mathfrak{q} = \Psi^{-1}(\mathrm{g}^{1})\subset
MA(n, \mathbb{C})$.
 Then $\mathfrak{q}$ is an additive sub-semigroup of $MA(n,\mathbb{C})$ and we have $\Psi(\mathfrak{q})=\mathrm{g}^{1}$.
  By Corollary~\ref{r:1}, we have \ $exp(\Psi(\mathfrak{q}))=\Phi(\mathcal{G}).$
\\
\textbullet\  $\mathfrak{q}_{v} =\{f(v),\ \ f\in
\mathfrak{q}\}\subset \mathbb{C}^{n}, \ \ v\in\mathbb{C}^{n}.$
\
\\

For groups of affine maps on  $\mathbb{K}^{n}$
($\mathbb{K}=\mathbb{R}$ or $\mathbb{C}$), their dynamics were
recently initiated for some classes in different point of view,
(see for instance, \cite{Ja}, \cite{Ku}, \cite{be}, \cite{AMY}).
The purpose here is to give analogous results as for linear
abelian sub-semigroup of $GL(n, \mathbb{C})$
 (\cite{aAh-M05}, Theorem 1.1).\
 \\
 \\
  Our main results are the following:

\begin{thm}\label{T:1} Let  $\mathcal{G}$ be an abelian sub-semigroup of $MA(n,\mathbb{C})$. Then  the  following are equivalent:
\
\begin{itemize}
  \item [(i)] $\mathcal{G}$ is hypercyclic.
  \item [(ii)] the orbit $\mathcal{G}(w_{0})$ is dense in $\mathbb{C}^{n}$.
  \item [(iii)] $\mathfrak{q}_{w_{0}}$ is an additive sub-semigroup dense in
$\mathbb{C}^{n}$.
\end{itemize}
\end{thm}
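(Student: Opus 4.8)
The plan is to transport the whole statement into $\mathbb{C}^{n+1}$ through the embeddings $\Phi$ and $\Psi$ and to prove the two equivalences (i)$\Leftrightarrow$(ii) and (ii)$\Leftrightarrow$(iii) separately. First I would record the dictionary that makes this possible: for $g=(C,c)\in\mathcal{G}$ one has $\Phi(g)v_0=(1,g(w_0))$, hence $p_2(Gv_0)=\mathcal{G}(w_0)$ and, since $p_2$ restricts to a homeomorphism of $\{1\}\times\mathbb{C}^n$ onto $\mathbb{C}^n$,
$$\overline{\mathcal{G}(w_0)}=\mathbb{C}^n \iff \overline{Gv_0}=\{1\}\times\mathbb{C}^n .$$
Likewise, for $f=(A,a)\in\mathfrak{q}$ one computes $\Psi(f)v_0=(0,f(w_0))$, so $p_2(\mathrm{g}^1 v_0)=\mathfrak{q}_{w_0}$; because the evaluation $f\mapsto f(w_0)=Aw_0+a$ is linear in $(A,a)$, the set $\mathfrak{q}_{w_0}$ is the image of the additive semigroup $\mathfrak{q}$ under a linear map and is therefore \emph{automatically} an additive sub-semigroup, so that only its density is at stake in (iii). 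Finally, conjugating by the affine change of coordinates $\varphi=\Phi^{-1}(P)$ (a homeomorphism of $\mathbb{C}^n$ sending orbits to orbits and the canonical point $p_2(u_0)$ to $w_0$), I reduce to $P=I_{n+1}$, $v_0=u_0$, $G\subset\mathcal{K}^*_{\eta,r}(\mathbb{C})$, with $G=\exp(\mathrm{g}^1)$ by Corollary~\ref{r:1}.

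The implication (ii)$\Rightarrow$(i) is immediate from the definition of hypercyclicity. For (i)$\Rightarrow$(ii) I would exploit that $u_0$ is a cyclic vector: each block $\mathbb{T}_{n_k}(\mathbb{C})$ carries $e_{k,1}$ onto all of $\mathbb{C}^{n_k}$, so $\mathcal{K}_{\eta,r}(\mathbb{C})u_0=\mathbb{C}^{n+1}$, while for each $k$ the subspace $\{x:\ x^{(k)}_1=0\}$ is $\mathcal{K}^*_{\eta,r}(\mathbb{C})$-invariant. Consequently any vector whose $G$-orbit is dense must have all of its leading block-coordinates nonzero, i.e. the non-hypercyclic vectors are confined to the proper closed invariant set $\bigcup_{k}\{x^{(k)}_1=0\}$. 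As $u_0$ has every leading coordinate equal to $1$ it avoids this set, and using the block-triangular normal form (Proposition~\ref{p:2}) to see that all points off this set share the same orbit-closure behaviour, hypercyclicity of $\mathcal{G}$ forces $\overline{Gu_0}=\{1\}\times\mathbb{C}^n$, that is, density of $\mathcal{G}(w_0)$.

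It remains to prove (ii)$\Leftrightarrow$(iii), which after the reduction amounts to
$$\overline{\{\exp(B)u_0:\ B\in\mathrm{g}^1\}}=\{1\}\times\mathbb{C}^n \iff \overline{\{Bu_0:\ B\in\mathrm{g}^1\}}=\{0\}\times\mathbb{C}^n .$$
Here I would analyse $\exp$ block by block. On the first block the diagonal entry of every element of $\mathrm{g}^1$ vanishes (because $\mathrm{g}^1\subset\Psi(MA(n,\mathbb{C}))$ has zero first row), so $\exp$ is polynomial there and $\exp(B)u_0$ and $Bu_0$ have the same leading behaviour; on a block with nonzero diagonal $\mu$, the eigenvalue direction is acted on through $e^{\mu}$, where $\exp:\mathbb{C}\to\mathbb{C}^*$ is a covering map and density passes between the additive datum $\mu$ and the multiplicative datum $e^{\mu}$ through the multivalued logarithm. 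Since $\mathrm{g}^1$ is an additive sub-semigroup it is closed under positive integer multiples, so $\exp(kB)=\exp(B)^k$ fills the fibres of the covering; combining the blocks and using the cyclicity of $u_0$ to guarantee that every coordinate direction is reached yields the stated equivalence in both directions.

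The principal obstacle is precisely this last equivalence: transferring density between the \emph{linear} map $B\mapsto Bu_0$ and the \emph{analytic} map $B\mapsto\exp(B)u_0$ when $\exp$ couples the coordinates nonlinearly and behaves differently on the nilpotent (translation) block and on the semisimple (scaling) blocks. Controlling this uniformly—ensuring that no density is created or destroyed in passing through $\exp$—is the heart of the argument; by comparison the base-point reduction in (i)$\Rightarrow$(ii) is secondary and rests on the normal form established before the theorem.
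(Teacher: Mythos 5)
Your translation dictionary ($\Phi(g)v_{0}=(1,g(w_{0}))$, $\Psi(f)v_{0}=(0,f(w_{0}))$, hence $p_{2}(G v_{0})=\mathcal{G}(w_{0})$ and $p_{2}(\mathrm{g}^{1}_{v_{0}})=\mathfrak{q}_{w_{0}}$) is correct and coincides with the paper's Lemma~\ref{L:01234} and the easy halves of Lemmas~\ref{LL0L:1} and~\ref{LL1L:9}. But the two steps you yourself flag as the hard ones --- the base-point reduction in (i)$\Rightarrow$(ii) and the passage between $\overline{\exp(B)u_{0}}$ and $\overline{Bu_{0}}$ in (ii)$\Leftrightarrow$(iii) --- are left as sketches, and they are precisely where all the content lies: together they constitute the main theorem of the linear theory (Theorem~\ref{T:5}, i.e.\ Theorem~1.1 of the cited paper of Ayadi--Marzougui). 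Your proposed arguments do not close them. For (i)$\Rightarrow$(ii), the assertion that ``all points off $\bigcup_{k}\{x^{(k)}_{1}=0\}$ share the same orbit-closure behaviour'' is exactly the nontrivial statement to be proved, not a consequence of the block-triangular form; nothing in the normal form alone prevents one such point from having a dense orbit while another does not. For (ii)$\Leftrightarrow$(iii), the block-by-block discussion of $\exp$ (``density passes through the multivalued logarithm'', ``$\exp(kB)=\exp(B)^{k}$ fills the fibres'') gestures at the statement rather than proving it; the coupling between the nilpotent and semisimple parts, and the fact that $\mathrm{g}^{1}$ is only a sub-semigroup, make this genuinely delicate, and one would essentially have to reprove the Ayadi--Marzougui theorem from scratch.

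You are also missing the structural device that lets the paper avoid doing that work: the linear result applies to semigroups with orbits dense in \emph{all} of $\mathbb{C}^{n+1}$, whereas $G(1,x)\subset\{1\}\times\mathbb{C}^{n}$ and $\mathrm{g}^{1}_{v_{0}}\subset\{0\}\times\mathbb{C}^{n}$ can never be dense in $\mathbb{C}^{n+1}$, so Theorem~\ref{T:5} cannot be invoked for $G$ and $\mathrm{g}^{1}$ directly. The paper resolves this by enlarging $G$ to $\widetilde{G}=\langle G,\ \mathbb{C}I_{n+1}\rangle$, for which $\widetilde{\mathrm{g}}=\mathrm{g}^{1}+\mathbb{C}I_{n+1}$ and $\widetilde{\mathrm{g}}_{v_{0}}=\mathrm{g}^{1}_{v_{0}}+\mathbb{C}v_{0}$ (Lemma~\ref{L:01234}); the extra line $\mathbb{C}v_{0}$ transverse to $\{0\}\times\mathbb{C}^{n}$ converts the codimension-one density statements into full density in $\mathbb{C}^{n+1}$ (Lemmas~\ref{LL0L:1} and~\ref{LL1L:9}), after which Theorem~\ref{T:5} applied to $\widetilde{G}$ delivers both hard implications at once. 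Without either this reduction or a complete self-contained proof of the two equivalences you isolate, the argument has a genuine gap.
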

\

For a \textit{finitely generated} abelian sub-semigroup
$\mathcal{G}\subset \textrm{MA}(n, \mathbb{R})$, let introduce the
following property: Consider the following rank condition on a
collection of affine maps $f_{1},\dots,f_{p}\in \mathcal{G}$. Let
  $f'_{1},\dots,f'_{p}\in \mathfrak{q}$ be such that
$e^{\Psi(f'_{k})} = \Phi(f_{k})$, $k=1,\dots,p$.  We say that
$f_{1},\dots,f_{p}$ satisfy the \emph{property $\mathcal{D}$} if
for every $(s_{1},\dots,s_{p};\ t_{2},\dots,t_{r})\in
\mathbb{Z}^{p+r-1}\backslash\{0\}:$   $$\mathrm{rank}
\left[\begin{array}{cccccc}
   \mathrm{Re}(f'_{1}(w_{0})) & \dots& \mathrm{Re}(f'_{p}(w_{0}))& 0 &\dots &0\\
      \mathrm{Im}(f'_{1}(w_{0})) & \dots& \mathrm{Im}(f'_{p}(w_{0}))& 2\pi p_{2}(e^{(2)})&\dots &2\pi p_{2}(e^{(r)}) \\
      s_{1} & \dots& s_{p}& t_{2}&\dots& t_{r} \\
 \end{array}\right]=2n+1.$$\
 For $r=1$, this means that  for every $(s_{1},\dots,s_{p})\in \mathbb{Z}^{p}\backslash\{0\}$: \\
$$\mathrm{rank} \left[\begin{array}{ccc}
   \mathrm{Re}(f'_{1}(w_{0})) & \dots& \mathrm{Re}(f'_{p}(w_{0}))\\
      \mathrm{Im}(f'_{1}(w_{0})) & \dots& \mathrm{Im}(f'_{p}(w_{0}))\\
      s_{1} & \dots& s_{p} \\
 \end{array}\right]=2n+1.$$
\
\\
For a vector $v\in\mathbb{C}^{n}$, we write $v = \mathrm{Re}(v)+
i\mathrm{Im}(v)$ where $\mathrm{Re}(v)$ and $\mathrm{Im}(v)\in
\mathbb{R}^{n}$. The next result can be stated as follows:
\

\begin{thm}\label{T:2} Let  $\mathcal{G}$  be an abelian sub-semigroup of  $MA(n, \mathbb{C})$
and let $f_{1}, \dots,f_{p}\in \mathcal{G}$ generating
$\mathcal{G}^{*}$ and let $f'_{1},\dots,f'_{p}\in\mathfrak{q}$ be
such that $e^{\Psi(f'_{1})} = \Phi(f_{1}),\dots, e^{\Psi(f'_{p})}
= \Phi(f_{p})$. Then the following are equivalent:
\begin{itemize}
  \item [(i)] $\mathcal{G}$ is hypercyclic.
  \item [(ii)] the maps $\varphi^{-1}\circ f_{1}\circ \varphi,\dots,\varphi^{-1}\circ f_{p}\circ \varphi$ in $MA(n, \mathbb{C})$ satisfy the property $\mathcal{D}$.
  \item [(iii)] $\mathfrak{q}_{w_{0}}= \left\{\begin{array}{c}
                                \underset{k=1}{\overset{p}{\sum}}\mathbb{N}f'_{k}(w_{0})+2i\pi\underset{k=2}{\overset{r}{\sum}}\mathbb{Z}(p_{2}(Pe^{(k)})),\ \ \  if\ r\geq 2 \\
\underset{k=1}{\overset{p}{\sum}}\mathbb{N}f'_{k}(w_{0}),\ \   if
\ r=1\ \ \  \ \ \ \ \ \ \ \ \ \ \ \ \ \ \ \ \ \ \ \ \ \ \ \ \ \ \
\end{array}\right.$

is an additive sub-semigroup dense in $\mathbb{C}^{n}$.
\end{itemize}
\end{thm}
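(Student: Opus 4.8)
The plan is to deduce Theorem~\ref{T:2} from Theorem~\ref{T:1} by making the set $\mathfrak{q}_{w_0}$ completely explicit and then recasting its density as the rank condition $\mathcal{D}$. I would prove the chain (i)$\Leftrightarrow$(iii)$\Leftrightarrow$(ii). By Theorem~\ref{T:1}, statement (i) is equivalent to $\mathfrak{q}_{w_0}$ being an additive sub-semigroup dense in $\mathbb{C}^{n}$; thus the content of (i)$\Leftrightarrow$(iii) is really the \emph{identity} asserting that $\mathfrak{q}_{w_0}$ equals the displayed generated set, density being common to both sides. The equivalence (iii)$\Leftrightarrow$(ii) is then a Kronecker-type density criterion for a finitely generated subset of $\mathbb{C}^{n}\cong\mathbb{R}^{2n}$.

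To obtain the identity I would first describe $\mathrm{g}$. Since $f_{1},\dots,f_{p}$ generate $\mathcal{G}^{*}$, the semigroup $G=\Phi(\mathcal{G})$ is generated by the commuting matrices $\Phi(f_{k})=e^{\Psi(f'_{k})}$, so a typical element of $G$ is $\prod_{k}\Phi(f_{k})^{m_{k}}=e^{\sum_{k}m_{k}\Psi(f'_{k})}$ with $m_{k}\in\mathbb{N}$. Conjugating by $P^{-1}$ carries everything into the block form $\mathcal{K}_{\eta,r}(\mathbb{C})$, where on each diagonal block $\mathbb{T}_{n_{j}}(\mathbb{C})$ the matrix logarithm is unique up to adding $2i\pi k_{j}I_{n_{j}}$ (the nilpotent part is determined, the constant diagonal only modulo $2i\pi$). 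Hence, writing $E_{j}\in M_{n+1}(\mathbb{C})$ for the diagonal projection onto the $j$-th block, one gets $\exp^{-1}(G)\cap\big(P\mathcal{K}_{\eta,r}(\mathbb{C})P^{-1}\big)=\{\sum_{k}m_{k}\Psi(f'_{k})+2i\pi\sum_{j}k_{j}PE_{j}P^{-1}\}$, using Lemma~\ref{LL:002} to see this is an additive semigroup. Intersecting with $\Psi(MA(n,\mathbb{C}))$ (first row zero) kills exactly the first block: reading the first row and using $e_{1}^{T}P=e_{1}^{T}P^{-1}=e_{1}^{T}$ and $e_{1}^{T}E_{j}=\delta_{1j}e_{1}^{T}$ forces $k_{1}=0$, which is the reason the sum in (iii) starts at $k=2$ (the first block records the eigenvalue $1$ that every affine map carries). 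Applying $\Psi^{-1}$ and then the relation $f(w_{0})=p_{2}(\Psi(f)v_{0})$, together with $v_{0}=Pu_{0}$, $E_{j}u_{0}=e^{(j)}$ and the linearity of $p_{2}$, yields $\mathfrak{q}_{w_{0}}=\sum_{k=1}^{p}\mathbb{N}f'_{k}(w_{0})+2i\pi\sum_{j=2}^{r}\mathbb{Z}\,p_{2}(Pe^{(j)})$, which is the set in (iii); combined with Corollary~\ref{r:1} and Theorem~\ref{T:1} this gives (i)$\Leftrightarrow$(iii).

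For (iii)$\Leftrightarrow$(ii) I would first remove $P$ by conjugation. Replacing $\mathcal{G}$ by $\mathcal{G}'=\varphi^{-1}\mathcal{G}\varphi$ is a homeomorphic conjugacy, hence preserves dense orbits, and by Proposition~\ref{p:2} it puts $\Phi(\mathcal{G}')=P^{-1}GP$ inside $\mathcal{K}^{*}_{\eta,r}(\mathbb{C})$, i.e.\ in normal form with $P=I_{n+1}$; this is exactly why (ii) is phrased for the conjugated maps $\varphi^{-1}\circ f_{k}\circ\varphi$. Under this conjugation the generating data transform by the linear part $C$ of $\varphi$: one checks $p_{2}(Pe^{(j)})=C\,p_{2}(e^{(j)})$ for $j\geq 2$ and $f'_{k}(w_{0})=C\,g'_{k}(\tilde w_{0})$, where $g'_{k}$ is the logarithm attached to $\varphi^{-1}\circ f_{k}\circ\varphi$ and $\tilde w_{0}=p_{2}(u_{0})$. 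Thus the set of (iii) is the image under the $\mathbb{R}$-linear isomorphism $C$ of the corresponding set for $\mathcal{G}'$, so its density is unchanged; moreover the vectors $p_{2}(e^{(j)})$ are real, so $2i\pi\,p_{2}(e^{(j)})$ is purely imaginary, which is precisely the structure recorded by the $0$ entries in the top block of the matrix defining $\mathcal{D}$. It therefore suffices to characterize, in normal form, when $\sum_{k}\mathbb{N}\alpha_{k}+\sum_{j}\mathbb{Z}\beta_{j}$ is dense in $\mathbb{R}^{2n}$, where $\alpha_{k}=(\mathrm{Re}\,g'_{k}(\tilde w_{0}),\mathrm{Im}\,g'_{k}(\tilde w_{0}))$ and $\beta_{j}=(0,2\pi p_{2}(e^{(j)}))$.

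The main obstacle is this last density criterion, and it is where I expect the real work to lie. By Pontryagin duality the \emph{subgroup} generated by the $\alpha_{k},\beta_{j}$ is dense in $\mathbb{R}^{2n}$ iff its annihilator $\{\xi:\langle\xi,\alpha_{k}\rangle\in\mathbb{Z},\ \langle\xi,\beta_{j}\rangle\in\mathbb{Z}\}$ is trivial, and unwinding this is exactly the statement that no nonzero integer vector $(s_{1},\dots,s_{p};t_{2},\dots,t_{r})$ lies in the row space of the $(2n)\times(p+r-1)$ coefficient matrix, i.e.\ that adjoining such a row raises the rank to $2n+1$; this is property $\mathcal{D}$. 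The genuinely delicate point is the passage from the \emph{subgroup} to the \emph{sub-semigroup} $\mathfrak{q}_{w_{0}}$, in which only $\mathbb{N}$-combinations of the $\alpha_{k}$ are permitted: I would need the dynamical/recurrence structure (the dense group returning near the origin through admissible iterates, so that the backward directions needed to fill $\mathbb{R}^{2n}$ are realized by large forward iterates together with the lattice part $\sum_{j}\mathbb{Z}\beta_{j}$) to promote group-density to semigroup-density under $\mathcal{D}$. Making this precise, and matching it exactly with the shape of $\mathcal{D}$, is the crux; it is the affine analogue of the linear density lemma behind \cite{aAh-M05}, and once it is in place the three equivalences close up.
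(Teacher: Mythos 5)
Your route is the same one the paper takes: the paper's entire proof of Theorem~\ref{T:2} is the sentence ``this follows from Theorem~\ref{T:1}, Proposition~\ref{p:11} and Proposition~\ref{p:12}'', and your proposal reconstructs exactly those three ingredients --- (i)$\Leftrightarrow$(iii) via Theorem~\ref{T:1} plus the explicit identity for $\mathfrak{q}_{w_{0}}$ (which you rederive by the same block-logarithm argument as Proposition~\ref{p:11}, including the observation that intersecting with $\Psi(MA(n,\mathbb{C}))$ forces $k_{1}=0$ and so removes the $k=1$ term), and (iii)$\Leftrightarrow$(ii) via Waldschmidt's rank criterion. The one point where you depart from the paper is that you stop and flag the passage from $\mathbb{Z}$-density to $\mathbb{N}$-density as an unresolved crux, and you are right to do so: Proposition~\ref{p:12} characterizes density of the \emph{group} $\sum_{k}\mathbb{Z}u_{k}$, whereas $\mathfrak{q}_{w_{0}}$ carries only $\mathbb{N}$-coefficients on the $f'_{k}(w_{0})$, and in general a finitely generated sub-semigroup can fail to be dense even when the group it generates is dense (already in $\mathbb{R}$, the set $\mathbb{N}+\mathbb{N}\sqrt{2}$ lies in $[0,\infty)$ while $\mathbb{Z}+\mathbb{Z}\sqrt{2}$ is dense). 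The paper does not address this at all --- it applies Proposition~\ref{p:12} directly to the $\mathbb{N}$-generated set, and indeed slips between $\mathbb{N}$ and $\mathbb{Z}$ coefficients inside the proof of Proposition~\ref{p:11} and again in Example~\ref{exe:2} --- so the ``crux'' you identify is not a defect of your proposal relative to the paper but a genuine gap you have correctly located in the paper's own argument. To close it one would have to invoke the semigroup versions of these density criteria from \cite{aA-Hm12} (the source of Theorem~\ref{T:5} and Proposition~\ref{p:10}), or supply the recurrence argument you sketch showing that under property $\mathcal{D}$ the forward $\mathbb{N}$-combinations together with the lattice part $\sum_{j}2i\pi\mathbb{Z}p_{2}(Pe^{(j)})$ already fill $\mathbb{C}^{n}$ densely; as written, neither your proposal nor the paper actually carries out that step.
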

\medskip

\begin{cor}\label{C:9} Let  $\mathcal{G}$ be an abelian sub-semigroup of $MA(n,
\mathbb{C})$ and $G=\Phi(\mathcal{G})$. Let $P\in \Phi(GA(n,
\mathbb{C}))$ such that $P^{-1}GP\subset
\mathcal{K}_{\eta,r}(\mathbb{C})$ where $1\leq r \leq n+1$ and
$\eta=(n_{1},\dots,n_{r})\in\mathbb{N}_{0}^{r}$. If \
$\mathcal{G}$ is generated by $2n - r+1$ commuting invertible
affine maps, then it has no dense orbit.
\end{cor}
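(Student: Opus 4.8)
The plan is to deduce the corollary directly from Theorem~\ref{T:2} by a dimension count, since ``having no dense orbit'' is by definition the negation of being hypercyclic. First I would observe that because $\mathcal{G}$ is generated by $p := 2n-r+1$ commuting \emph{invertible} affine maps $f_{1},\dots,f_{p}$, every element of $\mathcal{G}$ is a product of invertible maps and hence invertible; thus $\mathcal{G}=\mathcal{G}^{*}$, and $f_{1},\dots,f_{p}$ are legitimate generators of $\mathcal{G}^{*}$ to which Theorem~\ref{T:2} applies. Choosing $f'_{1},\dots,f'_{p}\in\mathfrak{q}$ with $e^{\Psi(f'_{k})}=\Phi(f_{k})$, the equivalence (i)$\Leftrightarrow$(ii) of Theorem~\ref{T:2} reduces the claim to showing that the conjugated generators $\varphi^{-1}\circ f_{1}\circ\varphi,\dots,\varphi^{-1}\circ f_{p}\circ\varphi$ can never satisfy property~$\mathcal{D}$.

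The heart of the matter is purely numerical. The matrix displayed in property~$\mathcal{D}$ has exactly $2n+1$ rows: $n$ rows coming from the real parts $\mathrm{Re}(f'_{k}(w_{0}))\in\mathbb{R}^{n}$, $n$ rows from the imaginary parts $\mathrm{Im}(f'_{k}(w_{0}))\in\mathbb{R}^{n}$, and one final integer row. Its number of columns is $p+(r-1)$ when $r\geq 2$ (the first $p$ columns indexed by the generators, and the remaining $r-1$ columns indexed by the vectors $2\pi\,p_{2}(e^{(k)})$), and exactly $p$ in the degenerate case $r=1$. I would then substitute $p=2n-r+1$: in the case $r\geq 2$ the column count equals $p+r-1=2n$, while in the case $r=1$ it equals $p=2n$. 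In either case the matrix is of size $(2n+1)\times 2n$.

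A $(2n+1)\times 2n$ matrix has rank at most $2n$, so it cannot attain the value $2n+1$ demanded by property~$\mathcal{D}$ for \emph{any} tuple $(s_{1},\dots,s_{p};t_{2},\dots,t_{r})$, in particular not for any nonzero one. Hence property~$\mathcal{D}$ fails, and by Theorem~\ref{T:2} the semigroup $\mathcal{G}$ is not hypercyclic; equivalently, no orbit $\mathcal{G}(v)$ is dense in $\mathbb{C}^{n}$, which is exactly the assertion.

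I do not expect any serious obstacle here, as the proof is a rank-versus-number-of-columns count against the fixed requirement $2n+1$. The only points that need care are bookkeeping: verifying that conjugation by $\varphi$ does not alter the number of generators (it does not, being a bijection of the generating set, so the conjugated system still has $p$ members), and treating the $r=1$ case of property~$\mathcal{D}$ separately, since there the matrix has $p$ rather than $p+r-1$ columns. Both checks yield the same conclusion, namely that the relevant matrix has strictly fewer columns ($2n$) than the rank ($2n+1$) that property~$\mathcal{D}$ requires.
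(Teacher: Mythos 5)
Your proof is correct and is essentially the paper's own argument: the paper likewise reduces to Theorem~\ref{T:2} and observes, via Proposition~\ref{p:12} applied to the $p+r-1=2n$ generators of $\mathfrak{q}_{w_{0}}$ given by Proposition~\ref{p:11}, that a matrix with $2n$ columns cannot have rank $2n+1$. Your use of clause (ii) (property $\mathcal{D}$) instead of clause (iii) is only a cosmetic difference, since property $\mathcal{D}$ is exactly that rank condition.
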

\medskip

\begin{cor}\label{C:10}  Let  $\mathcal{G}$ be an abelian sub-semigroup of $MA(n,
\mathbb{C})$. If $\mathcal{G}$ is generated by $n$ commuting
invertible affine maps, then it has no dense orbit.
\end{cor}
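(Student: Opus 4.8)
The plan is to obtain this as a special case of Corollary~\ref{C:9}. Let $\mathcal{G}$ be generated by $n$ commuting invertible affine maps $f_{1},\dots,f_{n}$, put $G=\Phi(\mathcal{G})$, and choose, as in the normal-form setup preceding Theorem~\ref{T:1}, a matrix $P\in\Phi(GA(n,\mathbb{C}))$ with $P^{-1}GP\subset\mathcal{K}_{\eta,r}(\mathbb{C})$ for a partition $\eta=(n_{1},\dots,n_{r})$ of $n+1$, where $1\le r\le n+1$. The decisive observation is purely numerical: since $r\le n+1$, we have $n\le 2n-r+1$. Thus the number of generators of $\mathcal{G}$ never exceeds the threshold $2n-r+1$ appearing in Corollary~\ref{C:9}.

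To land exactly in the hypothesis of Corollary~\ref{C:9} I would pad the generating family. Set $m=(2n-r+1)-n=n-r+1\ge 0$ and let $\mathcal{G}'$ be the semigroup generated by the $2n-r+1$ commuting invertible affine maps $f_{1},\dots,f_{n},\mathrm{Id},\dots,\mathrm{Id}$ (with $m$ copies of the identity appended). Adjoining identity maps leaves all linear parts unchanged, so $\Phi(\mathcal{G}')$ is conjugated to its normal form by the same $P$ and with the same partition $\eta$; in particular the integer $r$ is unaffected and $\mathcal{G}'$ is genuinely generated by $2n-r+1$ commuting invertible affine maps. Corollary~\ref{C:9} then yields that $\mathcal{G}'$ has no dense orbit. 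Since $\mathcal{G}\subset\mathcal{G}'$, for every $v\in\mathbb{C}^{n}$ we have $\mathcal{G}(v)\subset\mathcal{G}'(v)$, whence $\overline{\mathcal{G}(v)}\subset\overline{\mathcal{G}'(v)}\neq\mathbb{C}^{n}$; so $\mathcal{G}$ has no dense orbit either.

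Equivalently—and this is really the mechanism behind Corollary~\ref{C:9}—one can argue straight from Theorem~\ref{T:2}. Because the $f_{k}$ are invertible, every element of $\mathcal{G}$ lies in $GA(n,\mathbb{C})$, so $\mathcal{G}=\mathcal{G}^{*}$ and $f_{1},\dots,f_{n}$ generate $\mathcal{G}^{*}$; thus Theorem~\ref{T:2} applies with $p=n$. The matrix occurring in property~$\mathcal{D}$ has $2n+1$ rows but only $p+r-1=n+r-1\le 2n$ columns (for $r=1$ it has $n$ columns), so its rank is at most $2n<2n+1$. Hence property~$\mathcal{D}$ fails for every admissible nonzero tuple, and by Theorem~\ref{T:2} the semigroup $\mathcal{G}$ is not hypercyclic, i.e.\ has no dense orbit. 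I expect the only delicate points to be the bookkeeping that the normal-form data $(P,\eta,r)$ is preserved when identity maps are adjoined, together with the monotonicity remark that enlarging the semigroup cannot turn a non-dense orbit into a dense one; the essential phenomenon—that fewer than $2n+1-r$ generators force the property-$\mathcal{D}$ matrix to be too narrow to attain full rank—is immediate.
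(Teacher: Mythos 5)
Your proposal is correct and follows essentially the same route as the paper, which proves this corollary in one line by observing that $n\leq 2n-r+1$ (because $r\leq n+1$) and invoking Corollary~\ref{C:9}. Your padding argument and your direct rank count via Theorem~\ref{T:2} merely make explicit the reduction that the paper leaves implicit.
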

\medskip

\section{Normal form of abelian affine groups}
\medskip

 \begin{prop}\label{p:2}$($\cite{AAFF}, Proposition 2.1$)$ Let  $\mathcal{G}$ be an abelian subgroup of
$GA(n,\mathbb{C})$ and  $G=\Phi(\mathcal{G})$. Then there exists
$P\in \Phi(GA(n,\mathbb{C}))$ such that $P^{-1}GP$ is a subgroup
of $\mathcal{K}^{*}_{\eta,r}(\mathbb{C})\cap\Phi(GA(n,
\mathbb{C}))$,
 for some $r\leq n+1$ and $\eta=(n_{1},\dots,n_{r})\in\mathbb{N}_{0}^{r}$.
\end{prop}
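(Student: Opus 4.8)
The plan is to deduce the affine normal form from the known normal form for abelian subgroups of $\mathrm{GL}$ (the linear analogue \cite{aAh-M05}), and then to upgrade the conjugating matrix so that it lies in the affine group $\Phi(GA(n,\mathbb{C}))$. The starting observation is that every $\Phi(A,a)=\begin{bmatrix}1&0\\a&A\end{bmatrix}$ has first row $(1,0,\dots,0)$, so $e_{1}^{T}$ is a common left eigenvector of $G$ with eigenvalue $1$ and the hyperplane $\{0\}\times\mathbb{C}^{n}$ is $G$-invariant. Moreover, by the composition formula, conjugating $G$ by $\Phi(B,b)$ sends the linear part $A\mapsto B^{-1}AB$ and the translation $a\mapsto B^{-1}(a+(A-I)b)$; in particular the linear parts $\{A:(A,a)\in\mathcal{G}\}$ form an abelian subgroup $L$ of $\mathrm{GL}(n,\mathbb{C})$, and any conjugation inside $\Phi(GA(n,\mathbb{C}))$ keeps $G$ inside $\Phi(GA(n,\mathbb{C}))$ because $\Phi$ is a group homomorphism.

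First I would normalize the linear parts. By the $\mathrm{GL}(n,\mathbb{C})$ normal form there is $B\in\mathrm{GL}(n,\mathbb{C})$ with $B^{-1}LB\subset\mathcal{K}^{*}_{\eta_{0},r_{0}}(\mathbb{C})$ (the $n$-dimensional version), so after conjugating $G$ by $\Phi(B,0)\in\Phi(GA(n,\mathbb{C}))$ every element becomes $\begin{bmatrix}1&0\\a'&A'\end{bmatrix}$ with $A'$ block-diagonal, the $k$-th block being lower triangular with a common diagonal value $\mu_{k}=\mu_{k}(g)$. The distinct eigenvalue characters of $G$ are then the trivial character $\chi_{0}\equiv1$ carried by the top coordinate, together with the block characters $\mu_{1},\dots,\mu_{r_{0}}$ of $L$.

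Next I would decouple the top coordinate from the blocks by a pure translation $\Phi(I,b)$, which leaves the linear parts fixed and sends $a'\mapsto a'+(A'-I)b$. On each block $k$ whose character $\mu_{k}$ is nontrivial there is some $g$ with $\mu_{k}(g)\neq1$, so $A'-I$ is invertible on that block; setting $b$ there to $-(A'-I)^{-1}a'$ computed from such a $g$ clears the translation on that block. The key point is that this single $b$ simultaneously clears the translation for \emph{every} element of $G$: the commutation relations $gh=hg$ give the cocycle identity $(A-I)c=(C-I)a$ on the translation parts, and since $A-I$ and $C-I$ commute this forces the value $-(A'-I)^{-1}a'$ to be independent of the chosen $g$. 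After this conjugation the blocks with nontrivial character split off as genuine $\mathbb{T}_{n_{k}}$ summands, while the homogenizing coordinate together with the blocks of trivial character (the unipotent blocks, $\mu_{k}\equiv1$) form the generalized eigenspace of the character $1$; choosing a common flag adapted to the commuting nilpotent parts on this space and to $e_{1}$ puts it in $\mathbb{T}$ form, as one further block (of size one larger than the merged unipotent blocks, or a $1\times1$ block if there are none). This gives $P^{-1}GP\subset\mathcal{K}^{*}_{\eta,r}(\mathbb{C})$ for a partition $\eta$ of $n+1$, and since $P=\Phi(B,0)\Phi(I,b)=\Phi(B,Bb)\in\Phi(GA(n,\mathbb{C}))$, the conjugation stays inside the affine group as required.

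The main obstacle is the simultaneity of the translation-clearing step: one must produce a single conjugating translation $b$ that normalizes $a'$ for all of $G$ at once, and decide correctly which blocks merge with the homogenizing coordinate. This is exactly where abelianness is indispensable, the cocycle identity coming from $gh=hg$ being what guarantees that the local choices $-(A'-I)^{-1}a'$ agree across the group, and the commuting nilpotent parts on the trivial-character eigenspace being what allow a simultaneous lower-triangular (that is, $\mathbb{T}$) flag. Verifying that the resulting matrix is genuinely block-\emph{diagonal}, so that all off-block entries (including the first column below the merged block) vanish, is the part demanding the most careful bookkeeping.
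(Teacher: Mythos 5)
Your argument is essentially correct, but it is not the route the paper takes: the paper imports this statement from \cite{AAFF}, and the proof indicated there applies the $(n+1)$-dimensional linear normal form (Proposition~\ref{p:1}) to $G=\Phi(\mathcal{G})$ viewed as an abelian subgroup of $GL(n+1,\mathbb{C})$, and then adjusts the resulting conjugator --- using the invariance of $\{0\}\times\mathbb{C}^{n}$ and the triviality of the induced action on the quotient line --- so that it lands in $\Phi(GA(n,\mathbb{C}))$. You instead work from below: you normalize the $n$-dimensional linear parts, kill the translation part on every block with nontrivial character by a single affine conjugation (correctly justified by the cocycle identity $(A-I)c=(C-I)a$ coming from commutativity, together with the commutation of $(A-I)^{-1}$ with $(C-I)$), and absorb the residual translations into a block containing the homogenizing coordinate. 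This is a legitimate and in some ways more transparent proof, since it isolates exactly where abelianness enters, at the cost of the bookkeeping you acknowledge. Two points to tighten: the merged unipotent block must be placed \emph{first} in the partition $\eta$, because a matrix of $\mathcal{K}^{*}_{\eta,r}(\mathbb{C})$ lies in $\Phi(GA(n,\mathbb{C}))$ only if the diagonal value of its first block equals $1$ and the translation column is supported in that block; and your closing formula $P=\Phi(B,Bb)$ omits the final change of basis (the flag/reordering step on the trivial-character eigenspace), which must also be recorded as a factor of $P$ lying in $\Phi(GA(n,\mathbb{C}))$ --- it does, since it fixes $e_{1}$ modulo $\{0\}\times\mathbb{C}^{n}$ and preserves that hyperplane.
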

\medskip

This proposition can be generalized for any abelian affine
semigroup as follow:
\begin{prop}\label{p:0011}   Let  $\mathcal{G}$ be an abelian sub-semigroup of
$MA(n,\mathbb{C})$ and  $G=\Phi(\mathcal{G})$. Then there exists
$P\in \Phi(GA(n,\mathbb{C}))$ such that $P^{-1}GP$ is a subgroup
of $\mathcal{K}^{*}_{\eta,r}(\mathbb{C})\cap\Phi(GA(n,
\mathbb{C}))$,
 for some $r\leq n+1$ and $\eta=(n_{1},\dots,n_{r})\in\mathbb{N}_{0}^{r}$.
\end{prop}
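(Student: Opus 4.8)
Since $\mathcal{G}$ is abelian and the matrix formula defining $\Phi$ is meaningful and multiplicative for arbitrary (not necessarily invertible) affine maps, I would first regard $\Phi$ as a semigroup homomorphism $MA(n,\mathbb{C})\to M_{n+1}(\mathbb{C})$ extending the one on $GA(n,\mathbb{C})$; then $G=\Phi(\mathcal{G})$ is a commuting family in $M_{n+1}(\mathbb{C})$. Every element of $G$ has the shape $\begin{bmatrix}1&0\\ a&A\end{bmatrix}$, so $G$ preserves the hyperplane $V_{0}=\set{0}\times\mathbb{C}^{n}$ and the functional $\ell(x)=x_{1}$ is a common left eigenvector of $G$ with eigenvalue $1$. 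Proposition~\ref{p:2} already yields the statement when $\mathcal{G}$ is a group, and the plan is to show that its proof uses only the commutativity of $G$ together with this common invariant flag, and hence carries over verbatim to a sub-semigroup. Invertibility enters at exactly one point, in forcing the diagonal scalars of the diagonal blocks to be nonzero (landing in $\mathcal{K}^{*}_{\eta,r}(\mathbb{C})$ rather than $\mathcal{K}_{\eta,r}(\mathbb{C})$); dropping it, the identical construction conjugates $G$ into $\mathcal{K}_{\eta,r}(\mathbb{C})$, and into $\mathcal{K}^{*}_{\eta,r}(\mathbb{C})\cap\Phi(GA(n,\mathbb{C}))$ precisely on the invertible elements.

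For the construction itself I would work on the whole of $\mathbb{C}^{n+1}$. The commutative algebra generated by $G$ is finite dimensional, hence generated by finitely many of its elements; this reduction lets me ignore the possibility that $\mathcal{G}$ is not finitely generated. Decomposing $\mathbb{C}^{n+1}$ into the joint generalized eigenspaces of this algebra gives a $G$-invariant direct sum $\mathbb{C}^{n+1}=\bigoplus_{k=1}^{r}W_{k}$, whence a partition $\eta=(n_{1},\dots,n_{r})$ of $n+1$; in any basis adapted to this sum every element of $G$ is automatically block diagonal. On each $W_{k}$ the elements of $G$ act as a scalar plus a nilpotent endomorphism, the nilpotent parts commute, and a common invariant full flag simultaneously puts them in strictly lower triangular form, so each block lies in $\mathbb{T}_{n_{k}}(\mathbb{C})$. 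The only ingredient here that is not already present in the group case is the existence of common (generalized) eigenvectors for a commuting family that may contain singular matrices; but this holds over $\mathbb{C}$ irrespective of whether $0$ is an eigenvalue, so the step is insensitive to invertibility.

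It remains to arrange that the conjugating matrix lies in $\Phi(GA(n,\mathbb{C}))$ rather than in an arbitrary element of $GL(n+1,\mathbb{C})$, and this is the step I expect to be the main obstacle. The common left eigenvector $\ell$ forces $W_{k}\subset\ker\ell=V_{0}$ for every block with eigenvalue $\neq 1$, while the eigenvalue-$1$ block $W_{1}$ surjects onto the one-dimensional quotient $\mathbb{C}^{n+1}/V_{0}$. I would therefore choose the adapted basis so that its first vector is a lift in $W_{1}$ of a generator of that quotient, normalized so that $\ell$ takes the value $1$ on it, and all remaining basis vectors lie in $V_{0}$; the change-of-basis matrix is then of the form $P=\begin{bmatrix}1&0\\ b&B\end{bmatrix}$ with $b\in\mathbb{C}^{n}$ and $B\in GL(n,\mathbb{C})$, so that $P\in\Phi(GA(n,\mathbb{C}))$, $P$ preserves $V_{0}$, $\ell\circ P=\ell$, and $Pu_{0}\in\set{1}\times\mathbb{C}^{n}$ recovers the vector $v_{0}=(1,w_{0})$ of the normal form. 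The computation $$P^{-1}\begin{bmatrix}1&0\\ a&A\end{bmatrix}P=\begin{bmatrix}1&0\\ B^{-1}\bigl(a+(A-I_{n})b\bigr)&B^{-1}AB\end{bmatrix}$$ then shows concretely that the affine data is reshuffled into the distinguished first block while the linear blocks keep their triangular shape. The delicate point throughout is exactly this compatibility of the joint generalized eigenspace decomposition with the affine flag $V_{0}\subset\mathbb{C}^{n+1}$: it is what produces the extra dimension in $n_{1}$ coming from the first coordinate, pins down the vectors $u_{0},v_{0},w_{0}$, and allows the conjugation to be realized inside $\Phi(GA(n,\mathbb{C}))$.
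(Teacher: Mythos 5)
Your argument is correct and is essentially the paper's own: the paper merely asserts that the proof of Proposition~\ref{p:2} carries over, and that proof rests on the semigroup triangularization of Proposition~\ref{p:1} (joint generalized eigenspaces plus simultaneous strict triangularization of the commuting nilpotent parts, neither of which needs invertibility) together with exactly the adjustment of $P$ via the invariant hyperplane $\{0\}\times\mathbb{C}^{n}$ and the common left eigenvector $x\mapsto x_{1}$ that you carry out. You are also right to flag the one real defect, which lies in the statement rather than the proof: if $\mathcal{G}$ contains a singular affine map then $P^{-1}GP\not\subset\mathcal{K}^{*}_{\eta,r}(\mathbb{C})$, so the conclusion should read that $P^{-1}GP$ is a sub-semigroup (not ``subgroup'') of $\mathcal{K}_{\eta,r}(\mathbb{C})\cap\Phi(MA(n,\mathbb{C}))$, with only the invertible part $G^{*}$ landing in $\mathcal{K}^{*}_{\eta,r}(\mathbb{C})\cap\Phi(GA(n,\mathbb{C}))$.
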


The same proof of Proposition~\ref{p:2} remained valid for
Proposition~\ref{p:0011}.

The group $G'=P^{-1}GP$ is called the  \emph{normal form} of $G$.
In particular we have $Pu_{0}=v_{0}\in\{1\}\times \mathbb{C}^{n}$.
To prove Proposition ~\ref{p:2}, we need the following results:

\
\\
 Denote by $\mathcal{L}_{\mathcal{G}}$  the set of the linear parts of all elements of  $\mathcal{G}$.
  Then $\mathcal{L}_{\mathcal{G}}$ is an abelian sub-semigroup of $M_{n}(\mathbb{C})$. A subset $F\subset \mathbb{C}^{n}$  is called \emph{$
G$-invariant} (resp. \emph{$\mathcal{L}_{\mathcal{G}}$-invariant})
if  $A(F)\subset F$  for any  $A\in G$ (resp. $A\in
\mathcal{L}_{\mathcal{G}}$).
\medskip

\begin{prop}\label{p:1}$($~\cite{aA-Hm12}$)$ Let  $G'$ be an abelian sub-semigroup of \ $M_{m}(\mathbb{C})$, $m\geq 1$. Then
there exists $P\in GL(m,\mathbb{C})$ such that\ $P^{-1}G'P$ is a
sub-semigroup of  $\mathcal{K}_{\eta',r'}(\mathbb{C})$, for some
$r'\leq m$ and
$\eta'=(n'_{1},\dots,n'_{r'})\in\mathbb{N}_{0}^{r'}$.
\end{prop}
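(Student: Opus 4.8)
The plan is to reduce the statement to the structure theory of the finite-dimensional commutative algebra generated by $G'$. First I would let $\mathcal{A}$ be the unital subalgebra of $M_{m}(\mathbb{C})$ generated by $G'$ together with $I_{m}$. Since $G'$ is abelian, $\mathcal{A}$ is a commutative finite-dimensional $\mathbb{C}$-algebra, and it suffices to conjugate all of $\mathcal{A}$ into $\mathcal{K}_{\eta',r'}(\mathbb{C})$, because $G'\subset\mathcal{A}$. The fact that $G'$ is only a semigroup (not a group, and possibly without $I_{m}$) plays no role: passing to the generated algebra costs nothing, and any conjugation that puts $\mathcal{A}$ into block form automatically puts $G'$ there as well.

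Next I would invoke the generalized simultaneous eigenspace decomposition. For each algebra character $\chi:\mathcal{A}\to\mathbb{C}$ (there are finitely many, say $\chi_{1},\dots,\chi_{r'}$, because $\mathbb{C}$ is algebraically closed and $\mathcal{A}$ is commutative Artinian) I set
$$V_{k}=\{x\in\mathbb{C}^{m}:\ (A-\chi_{k}(A)I_{m})^{m}x=0\ \text{ for all }A\in\mathcal{A}\}.$$
Equivalently, I would decompose $\mathcal{A}\cong\prod_{k}\mathcal{A}_{k}$ into local Artinian factors via its primitive orthogonal idempotents and let $V_{k}$ be the corresponding summand of $\mathbb{C}^{m}$. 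I expect to obtain $\mathbb{C}^{m}=\bigoplus_{k=1}^{r'}V_{k}$ with each $V_{k}$ invariant under $\mathcal{A}$, and such that every $A\in\mathcal{A}$ acts on $V_{k}$ as $\chi_{k}(A)\,\mathrm{Id}+N_{A,k}$ with $N_{A,k}$ nilpotent (the maximal ideal of $\mathcal{A}_{k}$ acts nilpotently, while $\mathcal{A}_{k}/\mathfrak{m}_{k}=\mathbb{C}$ supplies the scalar $\chi_{k}(A)$). Because the $V_{k}$ are $\mathcal{A}$-invariant and their sum is direct, the matrix of each $A$ in a basis adapted to this decomposition is block-\emph{diagonal} — this is precisely what forces a direct sum $\mathbb{T}_{n'_{1}}(\mathbb{C})\oplus\dots\oplus\mathbb{T}_{n'_{r'}}(\mathbb{C})$ rather than a merely block-triangular form.

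Within a fixed block I would then simultaneously strictly triangularize the nilpotent parts. The operators $\{N_{A,k}:A\in\mathcal{A}\}$ pairwise commute and are all nilpotent, so they generate a commutative nil algebra; by the standard fact that a commuting family of nilpotent operators admits a common complete invariant flag $0=W_{0}\subset W_{1}\subset\dots\subset W_{n'_{k}}=V_{k}$ with $N_{A,k}(W_{i})\subset W_{i-1}$ (built by repeatedly extracting a common null vector and passing to the quotient), there is a basis of $V_{k}$ in which every $N_{A,k}$ is strictly lower triangular. In that basis each $A|_{V_{k}}$ equals $\chi_{k}(A)I+(\text{strictly lower triangular})$, i.e. an element of $\mathbb{T}_{n'_{k}}(\mathbb{C})$ with $\mu=\chi_{k}(A)$ and $n'_{k}=\dim V_{k}$. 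Concatenating these bases over $k=1,\dots,r'$ yields $P\in GL(m,\mathbb{C})$ with $P^{-1}\mathcal{A}P\subset\mathcal{K}_{\eta',r'}(\mathbb{C})$, where $\eta'=(n'_{1},\dots,n'_{r'})$ satisfies $n'_{1}+\dots+n'_{r'}=m$, so $r'\leq m$ as required.

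The routine parts are the simultaneous strict triangularization of the commuting nilpotents and the bookkeeping of bases. The step I expect to be the crux is establishing the block-\emph{diagonal} (rather than merely block-triangular) decomposition: one must verify that the generalized eigenspaces $V_{k}$ are genuinely $\mathcal{A}$-invariant and realize $\mathbb{C}^{m}$ as an internal direct sum. I would secure this through the primary decomposition of $\mathbb{C}^{m}$ as a module over the commutative Artinian algebra $\mathcal{A}$, equivalently via the primitive orthogonal idempotents of $\mathcal{A}$, which simultaneously produce the invariant summands and the characters $\chi_{k}$ giving the constant diagonal value $\mu=\chi_{k}(A)$ on each block.
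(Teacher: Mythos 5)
Your proof is correct. Note that the paper offers no proof of this proposition at all — it is quoted from \cite{aA-Hm12} — so there is no internal argument to compare against; your route (primary decomposition of $\mathbb{C}^{m}$ as a module over the commutative Artinian algebra generated by $G'$ and $I_{m}$, yielding invariant summands on which every element acts as a character value plus a nilpotent, followed by simultaneous strict triangularization of the commuting nilpotents) is the standard argument for this normal form and correctly handles the only delicate points, namely that the summands are genuinely invariant and nonzero (by faithfulness of $\mathbb{C}^{m}$ over $\mathcal{A}\subset M_{m}(\mathbb{C})$) and that passing from the semigroup $G'$ to the unital algebra it generates costs nothing.
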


\medskip

\begin{lem}\label{L:4-}$($\cite{aAh-M05}, Proposition 3.2$)$ $exp(\mathcal{K}_{\eta,r}(\mathbb{C}))=\mathcal{K}^{*}_{\eta,r}(\mathbb{C})$.
\end{lem}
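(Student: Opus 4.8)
The plan is to reduce immediately to a single block. Since $\mathcal{K}_{\eta,r}(\mathbb{C}) = \mathbb{T}_{n_{1}}(\mathbb{C})\oplus\dots\oplus\mathbb{T}_{n_{r}}(\mathbb{C})$ is a block-diagonal family and the exponential of a block-diagonal matrix is the block-diagonal matrix whose blocks are the exponentials of the original blocks, it suffices to establish the single-block identity $\exp(\mathbb{T}_{m}(\mathbb{C})) = \mathbb{T}_{m}^{\ast}(\mathbb{C})$ for each $m$. The full equality then follows by taking direct sums: a block-diagonal matrix lies in $\mathrm{GL}(n+1,\mathbb{C})$ iff each block is invertible, and a matrix of the form $(\ref{eq1})$ is invertible iff $\mu\neq 0$, so $\bigoplus_{k}\mathbb{T}_{n_{k}}^{\ast}(\mathbb{C}) = \mathcal{K}_{\eta,r}(\mathbb{C})\cap\mathrm{GL}(n+1,\mathbb{C}) = \mathcal{K}^{\ast}_{\eta,r}(\mathbb{C})$.

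For the inclusion $\exp(\mathbb{T}_{m}(\mathbb{C}))\subseteq\mathbb{T}_{m}^{\ast}(\mathbb{C})$, I would write any $M\in\mathbb{T}_{m}(\mathbb{C})$ as $M = \mu I_{m} + N$ with $N$ strictly lower triangular, hence nilpotent. Since $\mu I_{m}$ is central, $\exp(M) = e^{\mu}\exp(N)$. The series $\exp(N) = \sum_{k\geq 0}N^{k}/k!$ terminates by nilpotency, and because every positive power of a strictly lower triangular matrix is again strictly lower triangular, $\exp(N)$ is lower triangular with all diagonal entries equal to $1$. Therefore $\exp(M)$ is lower triangular with constant diagonal entry $e^{\mu}\neq 0$, i.e.\ $\exp(M)\in\mathbb{T}_{m}^{\ast}(\mathbb{C})$.

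For the reverse inclusion, I would take $T\in\mathbb{T}_{m}^{\ast}(\mathbb{C})$ and factor it as $T = \lambda(I_{m}+U)$, where $\lambda\neq 0$ is the common diagonal entry and $U = \lambda^{-1}(T-\lambda I_{m})$ is strictly lower triangular, hence nilpotent. I then choose any $\mu\in\mathbb{C}$ with $e^{\mu}=\lambda$ (possible since $\lambda\neq 0$) and set $N := \log(I_{m}+U) = \sum_{k\geq 1}(-1)^{k-1}U^{k}/k$; this sum is finite by nilpotency and strictly lower triangular since each $U^{k}$ with $k\geq 1$ is. Setting $M := \mu I_{m}+N\in\mathbb{T}_{m}(\mathbb{C})$ gives $\exp(M) = e^{\mu}\exp(N) = \lambda(I_{m}+U) = T$.

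The one point requiring care, which I would treat as the crux, is the identity $\exp(\log(I_{m}+U)) = I_{m}+U$ for nilpotent $U$, together with the bookkeeping that $\log(I_{m}+U)$ is strictly lower triangular. Both follow from the formal power series relation $\exp\circ\log = \mathrm{id}$: truncating the series at the nilpotency index is harmless because all higher terms vanish, and the triangularity is immediate from the closure of strictly lower triangular matrices under multiplication. This mutually inverse behavior of $\exp$ and $\log$ on nilpotent/unipotent matrices is the structural heart of the argument, while the surrounding block reduction and the forward inclusion are routine.
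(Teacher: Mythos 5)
Your proof is correct and complete. Note that the paper itself gives no argument for this lemma --- it is simply quoted from \cite{aAh-M05}, Proposition~3.2 --- so there is no internal proof to compare against; your block-diagonal reduction, the splitting $M=\mu I_{m}+N$ with $N$ nilpotent, and the mutually inverse behaviour of $\exp$ and $\log$ on unipotent matrices constitute the standard argument one would expect to find in that reference, and every step (the commutativity of $\mu I_{m}$ with $N$, the termination of both series, the triangularity bookkeeping, and the identification $\bigoplus_{k}\mathbb{T}_{n_{k}}^{\ast}(\mathbb{C})=\mathcal{K}^{\ast}_{\eta,r}(\mathbb{C})$ via $\det = \mu^{m}$ on each block) checks out.
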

\medskip

\begin{lem}\label{L:10000001}$($~\cite{AAFF}, Lemma 2.8$)$ $exp(\Psi(MA(n, \mathbb{C}))=GA(n, \mathbb{C})$.
\end{lem}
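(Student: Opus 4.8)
The plan is to prove the two inclusions separately, after one direct computation of the exponential of a matrix in $\Psi(MA(n,\mathbb{C}))$. First I would fix $f=(A,a)\in MA(n,\mathbb{C})$ and compute the powers of $N=\Psi(f)=\begin{bmatrix} 0 & 0 \\ a & A\end{bmatrix}$. A straightforward induction gives $N^{k}=\begin{bmatrix} 0 & 0 \\ A^{k-1}a & A^{k}\end{bmatrix}$ for $k\geq 1$, so that, summing the exponential series block by block,
$$e^{N}=\begin{bmatrix} 1 & 0 \\ \varphi(A)\,a & e^{A}\end{bmatrix},\qquad \varphi(A):=\sum_{k\geq 1}\frac{A^{k-1}}{k!}=\sum_{j\geq 0}\frac{A^{j}}{(j+1)!}.$$
Since $e^{A}\in GL(n,\mathbb{C})$ for every $A$, the matrix $e^{N}$ has exactly the block shape defining $\Phi$, namely $e^{N}=\Phi\bigl(e^{A},\varphi(A)a\bigr)$. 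This already yields the inclusion $\textrm{exp}(\Psi(MA(n,\mathbb{C})))\subseteq \Phi(GA(n,\mathbb{C}))$.

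For the reverse inclusion I would start from an arbitrary $g=(B,b)\in GA(n,\mathbb{C})$, so that $\Phi(g)=\begin{bmatrix} 1 & 0 \\ b & B\end{bmatrix}$ with $B\in GL(n,\mathbb{C})$, and seek $f=(A,a)$ with $e^{\Psi(f)}=\Phi(g)$. By the computation above this amounts to solving the two equations $e^{A}=B$ and $\varphi(A)a=b$. Since $B$ is invertible it admits a matrix logarithm, and --- this is the delicate point --- I would choose such a logarithm $A$ whose eigenvalues all have imaginary part in $(-\pi,\pi]$. Then set $a=\varphi(A)^{-1}b$, which is legitimate once $\varphi(A)$ is shown to be invertible; this gives $\exp(\Psi(A,a))=\Phi(B,b)$ and hence $\Phi(GA(n,\mathbb{C}))\subseteq \textrm{exp}(\Psi(MA(n,\mathbb{C})))$.

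The main obstacle is precisely the invertibility of $\varphi(A)$, and it is what forces the branch restriction. I would argue spectrally: $\varphi$ extends to the entire function $z\mapsto (e^{z}-1)/z$ (with value $1$ at $0$), so the eigenvalues of $\varphi(A)$ are the numbers $\varphi(\lambda)$ as $\lambda$ runs over the eigenvalues of $A$, where $\varphi(\lambda)=\frac{e^{\lambda}-1}{\lambda}$ for $\lambda\neq 0$ and $\varphi(0)=1$. Thus $\varphi(\lambda)=0$ can only occur when $e^{\lambda}=1$ with $\lambda\neq 0$, i.e. $\lambda\in 2\pi i\,\mathbb{Z}\setminus\{0\}$. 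But an eigenvalue with imaginary part in $(-\pi,\pi]$ cannot be a nonzero integer multiple of $2\pi i$, since such a multiple has imaginary part of absolute value at least $2\pi$. Hence no eigenvalue of $\varphi(A)$ vanishes and $\varphi(A)\in GL(n,\mathbb{C})$, completing the argument. The only remaining routine point is to justify that a logarithm with eigenvalues in the prescribed horizontal strip exists, which follows by taking logarithms branch-by-branch on the Jordan blocks of $B$, each nonzero eigenvalue $\mu$ of $B$ receiving the logarithm with imaginary part in $(-\pi,\pi]$ (so that $\mu=1$ automatically receives $\lambda=0$).
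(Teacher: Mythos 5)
Your argument is correct and complete. Note first that the paper gives no proof of this lemma at all --- it is imported from \cite{AAFF}, Lemma 2.8 --- and that the statement as printed contains a typo: the right-hand side should be $\Phi(GA(n,\mathbb{C}))$, which is the version stated in the introduction and the one you actually prove. Your block computation $e^{\Psi(A,a)}=\Phi\bigl(e^{A},\varphi(A)a\bigr)$ with $\varphi(z)=(e^{z}-1)/z$ (entire, $\varphi(0)=1$) settles the inclusion $\exp(\Psi(MA(n,\mathbb{C})))\subseteq\Phi(GA(n,\mathbb{C}))$ immediately, and you correctly isolate the only delicate point of the converse, namely the invertibility of $\varphi(A)$: the spectral mapping theorem reduces this to $\varphi(\lambda)\neq 0$ for the eigenvalues $\lambda$ of $A$, the zeros of $\varphi$ are exactly $2\pi i\mathbb{Z}\setminus\{0\}$, and the principal-branch logarithm taken Jordan block by Jordan block keeps all eigenvalues of $A$ in the strip $\{\,|\mathrm{Im}\,\lambda|\leq\pi\,\}$, which excludes those zeros. (Only a clash of notation to watch: the paper already uses $\varphi$ for $\Phi^{-1}(P)$, so you should rename your auxiliary function.) An alternative derivation, closer to the machinery this paper does set up, would obtain surjectivity from Lemma~\ref{L:4-} combined with Lemma~\ref{L:4}: conjugate $\Phi(g)$ into $\mathcal{K}^{*}_{\eta,r}(\mathbb{C})$, lift it under $\exp$ inside $\mathcal{K}_{\eta,r}(\mathbb{C})$, and translate the lift by a suitable $2ik\pi I_{n+1}$ to land in $\Psi(MA(n,\mathbb{C}))$ without changing its exponential. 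Your direct route is more elementary and self-contained, at the price of having to control the branch of the logarithm explicitly.
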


\medskip

\begin{lem}\label{L:4}$($~\cite{AAFF}, Lemma 2.9$)$  If \  $N\in P\mathcal{K}_{\eta,r}(\mathbb{C})P^{-1}$ such that $e^{N}\in\Phi(GA(n, \mathbb{C}))$,
then there exists $k\in\mathbb{Z}$ such that $N-2ik\pi
I_{n+1}\in\Psi(MA(n,\mathbb{C}))$.
\end{lem}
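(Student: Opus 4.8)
The plan is to force the hypotheses to pin down the first row of $N$, and then read off membership in $\Psi(MA(n,\mathbb{C}))$ directly. I would begin by recording two elementary descriptions. First, since $\Psi(A,a)=\begin{bmatrix} 0 & 0 \\ a & A \end{bmatrix}$, the subspace $\Psi(MA(n,\mathbb{C}))$ consists \emph{exactly} of the matrices of $M_{n+1}(\mathbb{C})$ whose first row is zero. Second, since $\Phi(A,a)=\begin{bmatrix} 1 & 0 \\ a & A \end{bmatrix}$, the set $\Phi(GA(n,\mathbb{C}))$ consists of the invertible matrices whose first row equals $e_{1}^{T}=(1,0,\dots,0)$. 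Writing $N=PMP^{-1}$ with $M\in\mathcal{K}_{\eta,r}(\mathbb{C})$ and $P\in\Phi(GA(n,\mathbb{C}))$, I note that both $P$ and $P^{-1}$ have first row $e_{1}^{T}$, because the inverse of a matrix of the shape $\begin{bmatrix} 1 & 0 \\ b & B \end{bmatrix}$ is again of that block-lower-triangular shape. Thus $e_{1}$ is a left eigenvector: $e_{1}^{T}P=e_{1}^{T}$ and $e_{1}^{T}P^{-1}=e_{1}^{T}$.

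Next I would track $e_{1}$ through both $N$ and $e^{N}$. Since $M=T_{1}\oplus\dots\oplus T_{r}$ is block diagonal with leading block $T_{1}\in\mathbb{T}_{n_{1}}(\mathbb{C})$ lower triangular of constant diagonal $\mu_{1}$, its first row is $(\mu_{1},0,\dots,0)$, i.e. $e_{1}^{T}M=\mu_{1}e_{1}^{T}$. Combining this with the eigenvector relations for $P,P^{-1}$ gives $e_{1}^{T}N=e_{1}^{T}PMP^{-1}=\mu_{1}e_{1}^{T}$, so the first row of $N$ is precisely $(\mu_{1},0,\dots,0)$. The same computation applied to $e^{M}=e^{T_{1}}\oplus\dots\oplus e^{T_{r}}$, whose leading block $e^{T_{1}}$ is lower triangular of constant diagonal $e^{\mu_{1}}$, yields $e_{1}^{T}e^{N}=e_{1}^{T}Pe^{M}P^{-1}=e^{\mu_{1}}e_{1}^{T}$.

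Finally I would invoke the hypothesis $e^{N}\in\Phi(GA(n,\mathbb{C}))$. By the description above, its first row must be $e_{1}^{T}$, so comparing with $e^{\mu_{1}}e_{1}^{T}$ forces $e^{\mu_{1}}=1$, whence $\mu_{1}=2ik\pi$ for some $k\in\mathbb{Z}$. For this value of $k$, the matrix $N-2ik\pi I_{n+1}$ differs from $N$ only on the diagonal, so its first row is $(\mu_{1}-2ik\pi,0,\dots,0)=(0,\dots,0)$; by the first description it therefore lies in $\Psi(MA(n,\mathbb{C}))$, which is exactly the conclusion.

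I expect no serious obstacle here: the entire argument is driven by the single observation that $e_{1}$ is a common left eigenvector that survives both conjugation by $P$ and passage to the exponential, and that the leading diagonal value $\mu_{1}$ of the block $\mathbb{T}_{n_{1}}(\mathbb{C})$ is the only relevant datum. The only points requiring care are verifying that $e_{1}^{T}P^{-1}=e_{1}^{T}$ (immediate from the block form of $P$) and that $e^{M}$ retains the block-lower-triangular structure with diagonal $e^{\mu_{1}}$ in its leading block (immediate since the exponential of a lower-triangular block is lower triangular with exponentiated diagonal). Both are routine block-matrix verifications rather than genuine difficulties.
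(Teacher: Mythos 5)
Your argument is correct and complete: the three observations you isolate --- that $\Psi(MA(n,\mathbb{C}))$ is exactly the set of matrices with zero first row, that every element of $\Phi(GA(n,\mathbb{C}))$ (hence $P$, $P^{-1}$, and $e^{N}$) has first row $e_{1}^{T}$, and that a matrix of $\mathcal{K}_{\eta,r}(\mathbb{C})$ and its exponential have first rows $\mu_{1}e_{1}^{T}$ and $e^{\mu_{1}}e_{1}^{T}$ respectively --- do combine to force $e^{\mu_{1}}=1$ and hence $\mu_{1}=2ik\pi$, after which the conclusion is immediate. Note that the paper does not reprove this lemma (it is quoted from \cite{AAFF}, Lemma 2.9), so there is no in-text proof to compare against; your left-eigenvector computation is a clean, self-contained justification consistent with the paper's setup, in which $P\in\Phi(GA(n,\mathbb{C}))$ as you assume.
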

\medskip

Denote by $G^{*}=G\cap GL(n+1, \mathbb{C})$.

\begin{lem}\label{L:21013}$($\cite{aAh-M05}, Lemma 4.2$)$ One has
$exp(\mathrm{g})=G^{*}$.
\end{lem}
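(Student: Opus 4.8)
The plan is to prove the two inclusions $\exp(\mathrm{g})\subseteq G^{*}$ and $G^{*}\subseteq \exp(\mathrm{g})$ separately, relying on two facts: the conjugation-equivariance of the exponential, $\exp(PNP^{-1})=P\exp(N)P^{-1}$ for $N\in M_{n+1}(\mathbb{C})$, and the surjectivity statement of Lemma~\ref{L:4-}, namely $\exp(\mathcal{K}_{\eta,r}(\mathbb{C}))=\mathcal{K}^{*}_{\eta,r}(\mathbb{C})$. The normal form $P^{-1}GP\subseteq\mathcal{K}^{*}_{\eta,r}(\mathbb{C})$ fixed in the preliminaries is what lets me pass back and forth between $G$ and the block triangular algebra $\mathcal{K}_{\eta,r}(\mathbb{C})$.

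For the first inclusion I would take $M\in\mathrm{g}$. By definition $M\in\exp^{-1}(G)$, so $\exp(M)\in G$, and $M\in P\mathcal{K}_{\eta,r}(\mathbb{C})P^{-1}$, say $M=PNP^{-1}$ with $N\in\mathcal{K}_{\eta,r}(\mathbb{C})$. Then $\exp(M)=P\exp(N)P^{-1}\in P\mathcal{K}^{*}_{\eta,r}(\mathbb{C})P^{-1}$ by Lemma~\ref{L:4-}; since $\mathcal{K}^{*}_{\eta,r}(\mathbb{C})\subseteq GL(n+1,\mathbb{C})$ and $P$ is invertible, $\exp(M)$ is invertible. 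Hence $\exp(M)\in G\cap GL(n+1,\mathbb{C})=G^{*}$, which gives $\exp(\mathrm{g})\subseteq G^{*}$.

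For the reverse inclusion I would start from $B\in G^{*}$. Since $B$ and $P$ are invertible, $C:=P^{-1}BP$ is invertible, and by the normal form $C\in P^{-1}GP\subseteq\mathcal{K}^{*}_{\eta,r}(\mathbb{C})$. Applying Lemma~\ref{L:4-} in the surjective direction produces $N\in\mathcal{K}_{\eta,r}(\mathbb{C})$ with $\exp(N)=C$. Setting $M:=PNP^{-1}\in P\mathcal{K}_{\eta,r}(\mathbb{C})P^{-1}$, equivariance gives $\exp(M)=P\exp(N)P^{-1}=PCP^{-1}=B\in G$, so $M\in\exp^{-1}(G)$ and therefore $M\in\mathrm{g}$ with $\exp(M)=B$. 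This shows $G^{*}\subseteq\exp(\mathrm{g})$, completing the equality.

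The argument is short because the substantive analytic content — that $\exp$ maps each triangular Toeplitz block onto its invertible counterpart — is already isolated in Lemma~\ref{L:4-}. The only point that needs care is the role of invertibility: $\exp$ always lands in $GL(n+1,\mathbb{C})$, so one can recover at most $G^{*}$, and conversely every element of $G^{*}$ conjugates into the invertible triangular set $\mathcal{K}^{*}_{\eta,r}(\mathbb{C})$ where the surjectivity applies. I expect the main obstacle to be purely bookkeeping: keeping the intersection defining $\mathrm{g}$ aligned with the conjugated algebra $P\mathcal{K}_{\eta,r}(\mathbb{C})P^{-1}$, so that the element $M$ constructed in the second inclusion genuinely lies in $\mathrm{g}$ and not merely in $\exp^{-1}(G)$.
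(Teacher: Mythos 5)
Your argument is correct: both inclusions go through exactly as you describe, with $\exp(\mathrm{g})\subseteq G^{*}$ following from the fact that $\exp$ always lands in $GL(n+1,\mathbb{C})$, and the reverse inclusion resting on the surjectivity $\exp(\mathcal{K}_{\eta,r}(\mathbb{C}))=\mathcal{K}^{*}_{\eta,r}(\mathbb{C})$ of Lemma~\ref{L:4-} together with conjugation by $P$. Note that the paper itself gives no proof of this statement --- it is quoted verbatim from \cite{aAh-M05}, Lemma~4.2 --- so there is nothing to compare against line by line, but your route through Lemma~\ref{L:4-} is the natural one and uses only ingredients the paper already has on hand.
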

\medskip

\begin{cor}\label{C:21012}$($~\cite{AAFF}, Corollary 2.11$)$ Let $G=\Phi(\mathcal{G})$. We have $\mathrm{g}= \mathrm{g}^{1}+2i\pi \mathbb{Z}I_{n+1}$.\
\end{cor}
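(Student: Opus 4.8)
The plan is to prove the two inclusions separately, exploiting throughout that $\mathcal{K}_{\eta,r}(\mathbb{C})$ is a \emph{linear} subspace of $M_{n+1}(\mathbb{C})$ containing $I_{n+1}$: each block $\mathbb{T}_{n_k}(\mathbb{C})$ is closed under addition (the diagonal entry $\mu$ simply adds) and contains the identity with $\mu=1$. Hence $P\mathcal{K}_{\eta,r}(\mathbb{C})P^{-1}$ is again a linear subspace, and $I_{n+1}=PI_{n+1}P^{-1}$ lies in it, so $2i\pi k I_{n+1}\in P\mathcal{K}_{\eta,r}(\mathbb{C})P^{-1}$ for every $k\in\mathbb{Z}$. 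The second recurring tool is the periodicity $e^{2i\pi k I_{n+1}}=e^{2i\pi k}I_{n+1}=I_{n+1}$, valid precisely because $k$ is an integer.

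For the inclusion $\mathrm{g}^{1}+2i\pi\mathbb{Z}I_{n+1}\subset\mathrm{g}$, I would take $M\in\mathrm{g}^{1}\subset\mathrm{g}$ and $k\in\mathbb{Z}$. Since $M\in P\mathcal{K}_{\eta,r}(\mathbb{C})P^{-1}$ and this set is a vector space containing $2i\pi k I_{n+1}$, the sum $M+2i\pi k I_{n+1}$ again lies in $P\mathcal{K}_{\eta,r}(\mathbb{C})P^{-1}$. Moreover, $M$ and $2i\pi k I_{n+1}$ commute, so $e^{M+2i\pi k I_{n+1}}=e^{M}e^{2i\pi k I_{n+1}}=e^{M}\in G$. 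Therefore $M+2i\pi k I_{n+1}\in\exp^{-1}(G)\cap P\mathcal{K}_{\eta,r}(\mathbb{C})P^{-1}=\mathrm{g}$, giving this inclusion.

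For the reverse inclusion $\mathrm{g}\subset\mathrm{g}^{1}+2i\pi\mathbb{Z}I_{n+1}$, which is the substantive direction, I would start from an arbitrary $N\in\mathrm{g}$, so $N\in P\mathcal{K}_{\eta,r}(\mathbb{C})P^{-1}$ and $e^{N}\in G$. Since $e^{N}$ is automatically invertible, $e^{N}\in G^{*}$, and one checks that $G^{*}\subset\Phi(GA(n,\mathbb{C}))$: an element $\Phi(f)$ of $G$ is invertible exactly when the linear part of $f$ is, i.e. $f\in GA(n,\mathbb{C})$ (equivalently, conjugating the normal form $P^{-1}G^{*}P\subset\mathcal{K}^{*}_{\eta,r}(\mathbb{C})\cap\Phi(GA(n,\mathbb{C}))$ back by $P\in\Phi(GA(n,\mathbb{C}))$ stays inside the group $\Phi(GA(n,\mathbb{C}))$). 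With $N\in P\mathcal{K}_{\eta,r}(\mathbb{C})P^{-1}$ and $e^{N}\in\Phi(GA(n,\mathbb{C}))$ in hand, I would invoke Lemma~\ref{L:4} to produce $k\in\mathbb{Z}$ with $N-2ik\pi I_{n+1}\in\Psi(MA(n,\mathbb{C}))$. Setting $M=N-2ik\pi I_{n+1}$, the same linearity argument gives $M\in P\mathcal{K}_{\eta,r}(\mathbb{C})P^{-1}$, while $e^{M}=e^{N}e^{-2ik\pi I_{n+1}}=e^{N}\in G$; thus $M\in\mathrm{g}$, and being also in $\Psi(MA(n,\mathbb{C}))$ we get $M\in\mathrm{g}^{1}$. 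Then $N=M+2ik\pi I_{n+1}\in\mathrm{g}^{1}+2i\pi\mathbb{Z}I_{n+1}$, completing the proof.

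I expect the only delicate point to be the justification that $e^{N}\in\Phi(GA(n,\mathbb{C}))$, which is exactly the hypothesis needed to apply Lemma~\ref{L:4}; this is where the normal form and the conjugation-stability of $\Phi(GA(n,\mathbb{C}))$ under $P$ are essential. Everything else reduces to the linearity of $\mathcal{K}_{\eta,r}(\mathbb{C})$ and the integer periodicity $e^{2i\pi k I_{n+1}}=I_{n+1}$.
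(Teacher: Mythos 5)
Your proof is correct, and it follows what is evidently the intended route: the paper states this corollary without proof (citing \cite{AAFF}), but it imports Lemma~\ref{L:4} immediately beforehand for exactly the reduction you perform, namely shifting $N\in\mathrm{g}$ by $2ik\pi I_{n+1}$ into $\Psi(MA(n,\mathbb{C}))$ after observing that $e^{N}$, being invertible, lies in $\Phi(GA(n,\mathbb{C}))$. The easy inclusion via linearity of $P\mathcal{K}_{\eta,r}(\mathbb{C})P^{-1}$ and the periodicity $e^{2i\pi kI_{n+1}}=I_{n+1}$ is also handled correctly, so nothing further is needed.
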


Denote by $\mathcal{G}^{*}=\mathcal{G}\cap GA(n, \mathbb{C})$.

\begin{cor}\label{r:1} We have $exp(\Psi(\mathfrak{q}))=\Phi(\mathcal{G}^{*})$.
\end{cor}
\medskip

\begin{proof}
By Lemmas ~\ref{L:21013} and ~\ref{C:21012}, We have
$G=exp(\mathrm{g})=exp(\mathrm{g}^{1}+2i\pi
\mathbb{Z}I_{n+1})=exp(\mathrm{g}^{1})$. Since
$\mathrm{g}^{1}=\Psi(\mathfrak{q})$, we get
$exp(\Psi(\mathfrak{q}))=\Phi(\mathcal{G})$.
\end{proof}
\medskip

\section{ Proof of Theorem ~\ref{T:1}}
\medskip

Let $\widetilde{G}$ be the semigroup generated by $G$ and
$\mathbb{C}I_{n+1}=\{\lambda I_{n+1}:\  \  \ \lambda\in \mathbb{C}
\}$. Then $\widetilde{G}$ is an abelian sub-semigroup of
$GL(n+1,\mathbb{C})$. By Proposition~\ref{p:2}, there exists
$P\in\Phi(GA(n, \mathbb{C}))$ such that $P^{-1}GP$ is a
sub-semigroup of $\mathcal{K}^{*}_{\eta,r}(\mathbb{C})$ for some
$r\leq n+1$ and $\eta=(n_{1},\dots,n_{r})\in\mathbb{N}_{0}^{r}$
and this also implies that $P^{-1}\widetilde{G}P$ is a
sub-semigroup of $\mathcal{K}^{*}_{\eta,r}(\mathbb{C})$. Set
$\widetilde{\mathrm{g}}=exp^{-1}(\widetilde{G})\cap
(P\mathcal{K}_{\eta,r}(\mathbb{C})P^{-1})$ \ and \
 $\widetilde{\mathrm{g}}_{v_{0}}=\{Bv_{0}\ : \ B\in
\widetilde{\mathrm{g}}\}$.\ Then we have the following theorem,
applied to $\widetilde{G}$:
\medskip

\begin{thm}\label{T:5}$($~\cite{aA-Hm12},\ Theorem 1.1$)$  Under the notations above, the following properties are equivalent:
\begin{itemize}
  \item [(i)] $\widetilde{G}$ has a dense orbit in
  $\mathbb{C}^{n+1}$.
  \item [(ii)] the orbit $\widetilde{G}(v_{0})$ is dense in
$\mathbb{C}^{n+1}$.
  \item [(iii)] $\widetilde{\mathrm{g}}_{v_{0}}$ is an additive sub-semigroup dense in
$\mathbb{C}^{n+1}$.
\end{itemize}
\end{thm}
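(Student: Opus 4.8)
The plan is to reduce everything to the \emph{commutative algebra} structure of the normal form and then transport the three conditions through a single linear isomorphism. First I would invoke Proposition~\ref{p:2} to conjugate by $P$, so that without loss of generality $\widetilde{G}\subset\mathcal{K}^{*}_{\eta,r}(\mathbb{C})$ and $v_{0}=u_{0}$; since conjugation by $P$ and its action on vectors are linear homeomorphisms of $M_{n+1}(\mathbb{C})$ and of $\mathbb{C}^{n+1}$, each of (i), (ii), (iii) is preserved under this reduction. Write $\mathcal{A}=\mathcal{K}_{\eta,r}(\mathbb{C})$, which is a \emph{commutative} unital subalgebra of $M_{n+1}(\mathbb{C})$ of dimension $n_{1}+\dots+n_{r}=n+1$, with $\widetilde{G}\subset\mathcal{K}^{*}_{\eta,r}(\mathbb{C})\subset\mathcal{A}$ and $\widetilde{\mathrm{g}}\subset\mathcal{A}$.

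The central observation I would establish is that the evaluation map $\theta\colon\mathcal{A}\to\mathbb{C}^{n+1}$, $\theta(B)=Bu_{0}$, is a \emph{linear isomorphism}. It is linear and both spaces have dimension $n+1$, so it suffices to check surjectivity, which is exactly the cyclicity of $u_{0}$: on each block $\mathbb{T}_{n_{k}}(\mathbb{C})\,e_{k,1}$ is the set of first columns of lower-triangular Toeplitz matrices, which is all of $\mathbb{C}^{n_{k}}$, whence $\mathcal{A}u_{0}=\mathbb{C}^{n+1}$. Through $\theta$ the conditions translate cleanly: since $\widetilde{G}\subset\mathcal{A}$ one has $\widetilde{G}(u_{0})=\theta(\widetilde{G})$, so (ii) is equivalent to $\widetilde{G}$ being dense in $\mathcal{A}$; likewise $\widetilde{\mathrm{g}}_{u_{0}}=\theta(\widetilde{\mathrm{g}})$, so (iii) is equivalent to $\widetilde{\mathrm{g}}$ being dense in $\mathcal{A}$. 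That $\widetilde{\mathrm{g}}_{u_{0}}$ is an additive sub-semigroup is then immediate, because $\widetilde{\mathrm{g}}$ is one by Lemma~\ref{LL:002} and $\theta$ is linear.

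To link the two density statements I would use the exponential. Because $\mathcal{A}$ is commutative, for $B,H\in\mathcal{A}$ the differential of $\exp$ satisfies $d\exp_{B}(H)=e^{B}H$; since $e^{B}\in\mathcal{K}^{*}_{\eta,r}(\mathbb{C})$ is a unit of $\mathcal{A}$, multiplication by it is bijective on $\mathcal{A}$, so $d\exp_{B}$ is invertible and $\exp\colon\mathcal{A}\to\mathcal{K}^{*}_{\eta,r}(\mathbb{C})$ is a local diffeomorphism; it is surjective by Lemma~\ref{L:4-}. Being continuous, open and surjective, $\exp$ transports density both ways. If $\widetilde{\mathrm{g}}$ is dense in $\mathcal{A}$, then $\exp(\widetilde{\mathrm{g}})=\widetilde{G}$ (Lemma~\ref{L:21013} applied to $\widetilde{G}$) is dense in $\mathcal{K}^{*}_{\eta,r}(\mathbb{C})$, hence in $\mathcal{A}$; conversely, if $\widetilde{G}$ is dense in $\mathcal{A}$, then for every open $U\subset\mathcal{A}$ the open set $\exp(U)$ meets $\widetilde{G}$, so $U$ meets $\exp^{-1}(\widetilde{G})\cap\mathcal{A}=\widetilde{\mathrm{g}}$, giving density of $\widetilde{\mathrm{g}}$. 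This yields (ii)$\Leftrightarrow$(iii). The step I expect to require the most care is precisely this transfer: one must confirm that commutativity of $\mathcal{A}$ genuinely makes $\exp$ a local diffeomorphism \emph{everywhere} (no critical points obstruct openness), and that $\widetilde{\mathrm{g}}$ coincides with $\exp^{-1}(\widetilde{G})\cap\mathcal{A}$ rather than a proper subset of it.

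Finally, for (i)$\Leftrightarrow$(ii): the implication (ii)$\Rightarrow$(i) is trivial, as a dense orbit through $u_{0}$ is a dense orbit. For (i)$\Rightarrow$(ii), suppose $\overline{\widetilde{G}(v)}=\mathbb{C}^{n+1}$ for some $v\in\mathbb{C}^{n+1}$. Writing $\theta_{v}(B)=Bv$, the orbit satisfies $\widetilde{G}(v)=\theta_{v}(\widetilde{G})\subset\theta_{v}(\mathcal{A})=\mathcal{A}v$; if $v$ were not cyclic this would be a proper subspace and the orbit could not be dense, so $v$ must be cyclic, whence $\theta_{v}$ is a linear isomorphism and density of $\theta_{v}(\widetilde{G})$ forces $\widetilde{G}$ to be dense in $\mathcal{A}$. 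By the translation of the second paragraph this is equivalent to (ii), completing the cycle of implications.
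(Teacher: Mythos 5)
You have misread the definition of $\mathbb{T}_{m}(\mathbb{C})$, and this sinks the central device of your proof. Display (\ref{eq1}) defines $\mathbb{T}_{m}(\mathbb{C})$ as the lower triangular matrices with constant scalar diagonal $\mu$ and \emph{arbitrary} entries below the diagonal — not lower triangular Toeplitz matrices. Hence $\dim_{\mathbb{C}}\mathbb{T}_{m}(\mathbb{C})=1+\tfrac{m(m-1)}{2}$, which exceeds $m$ as soon as $m\geq 3$, and $\mathcal{A}=\mathcal{K}_{\eta,r}(\mathbb{C})$ is then neither of dimension $n+1$ nor commutative (in $\mathbb{T}_{3}(\mathbb{C})$ one has $E_{2,1}E_{3,2}=0\neq E_{3,1}=E_{3,2}E_{2,1}$ for the elementary matrices $E_{i,j}$). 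Your evaluation map $\theta(B)=Bu_{0}$ is indeed surjective (first columns are arbitrary), but it has a nontrivial kernel (e.g.\ $E_{3,2}$), so it is \emph{not} an isomorphism, and the inference you need in both translations — density of $\theta(S)$ in $\mathbb{C}^{n+1}$ implies density of $S$ in $\mathcal{A}$ — is false for a surjection with kernel. This breaks both claimed equivalences ``(ii) iff $\widetilde{G}$ dense in $\mathcal{A}$'' and ``(iii) iff $\widetilde{\mathrm{g}}$ dense in $\mathcal{A}$'', as well as the step in (i)$\Rightarrow$(ii) where cyclicity of $v$ gives you only surjectivity of $\theta_{v}$, not an isomorphism. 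The commutativity-based formula $d\exp_{B}(H)=e^{B}H$ is likewise invalid in the noncommutative $\mathcal{A}$; that particular step is repairable ($\exp$ is still a local diffeomorphism on $\mathcal{K}_{\eta,r}(\mathbb{C})$ because $\mathrm{ad}_{B}$ is nilpotent there), but the $\theta$-step is not.

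The gap is not patchable along your lines, because your intermediate condition is \emph{strictly stronger} than the theorem: by Lemma~\ref{L:01234}(i) and Proposition~\ref{p:10}, $\widetilde{\mathrm{g}}=\mathrm{g}^{1}+\mathbb{C}I_{n+1}$ with $\mathrm{g}^{1}$ finitely generated in the finitely generated case, so the closure of $\widetilde{\mathrm{g}}$ is a closed subgroup of topological rank at most $p+2$; for the known minimal hypercyclic examples with a single block $\mathbb{T}_{n+1}(\mathbb{C})$, $n\geq 2$ (cf.~\cite{aA-Hm12}, where $p=2n+2$ suffices), conditions (ii) and (iii) hold, yet density of $\widetilde{\mathrm{g}}$ in $\mathcal{A}$, of real dimension $n(n+1)+2>2n+2$, would require rank at least $n(n+1)+1>2n+2$ — impossible. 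So (ii) and (iii) are genuinely not equivalent to density in the algebra, and a correct proof must work with the images $\widetilde{G}(v_{0})$ and $\widetilde{\mathrm{g}}_{v_{0}}$ in $\mathbb{C}^{n+1}$ directly, as is done in \cite{aA-Hm12}; note the paper itself offers no proof of Theorem~\ref{T:5} to compare against — it quotes the result verbatim from that reference, whose argument is substantially more involved than a transport through $\theta$. Your scheme is valid exactly in the special case where every $n_{k}\leq 2$, for then $\theta$ really is a linear isomorphism and $\mathcal{K}_{\eta,r}(\mathbb{C})$ really is commutative.
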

\medskip

\begin{lem}\label{LL:002}$($\cite{aAh-M05},\ Lemma 4.1$)$ The sets $\mathrm{g}$ and \ $\widetilde{\mathrm{g}}$ are additive subgroups of
$M_{n+1}(\mathbb{C})$. In particular, $\mathrm{g}_{v_{0}}$ and
$\widetilde{\mathrm{g}}_{v_{0}}$ are additive subgroups of
$\mathbb{C}^{n+1}$.
\end{lem}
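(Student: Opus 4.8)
The plan is to establish the two structural statements about $\mathrm{g}$ and $\widetilde{\mathrm{g}}$ and then read off the ``in particular'' clause for free. First observe that once $\mathrm{g}$ (resp.\ $\widetilde{\mathrm{g}}$) is known to be an additive subgroup of $M_{n+1}(\mathbb{C})$, the set $\mathrm{g}_{v_{0}}$ (resp.\ $\widetilde{\mathrm{g}}_{v_{0}}$) is exactly its image under the linear map $B\longmapsto Bv_{0}$ from $M_{n+1}(\mathbb{C})$ to $\mathbb{C}^{n+1}$, and a linear image of an additive subgroup is again an additive subgroup; so the final sentence is immediate. Everything therefore reduces to showing that $\mathrm{g}=\exp^{-1}(G)\cap\bigl(P\mathcal{K}_{\eta,r}(\mathbb{C})P^{-1}\bigr)$, and likewise $\widetilde{\mathrm{g}}$, contains $0$ and is closed under addition and negation.

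The crux is a commutation principle inside the algebra $\mathcal{A}:=P\mathcal{K}_{\eta,r}(\mathbb{C})P^{-1}$: for $A,B\in\mathcal{A}$ one has $AB=BA$ if and only if $e^{A}e^{B}=e^{B}e^{A}$. Since conjugation by $P$ preserves both matrix products and the exponential, it suffices to prove this in $\mathcal{K}_{\eta,r}(\mathbb{C})$, and then blockwise in a single $\mathbb{T}_{m}(\mathbb{C})$. Here I would split $A=\lambda I+N_{A}$ into its central scalar part and its strictly lower triangular nilpotent part, use $e^{A}=e^{\lambda}e^{N_{A}}$ with $e^{N_{A}}$ unipotent, and compare the commutator $[N_{A},N_{B}]$ with $[e^{N_{A}},e^{N_{B}}]$ via the terminating expansions of $\exp$ and $\log$ on nilpotents. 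The forward implication is trivial; the reverse implication, that commuting unipotents force their nilpotent logarithms to commute, is the main obstacle, since a priori the logarithms of a commuting family need not commute, and one must exploit that $\log$ is a polynomial in the nilpotent part together with the triangular shape of $\mathbb{T}_{m}(\mathbb{C})$.

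Granting the commutation principle, the subgroup property follows quickly. Because $G$ and $\widetilde{G}$ are abelian, any $A,B\in\mathrm{g}$ satisfy $e^{A},e^{B}\in G$ with $e^{A}e^{B}=e^{B}e^{A}$, whence $AB=BA$, and therefore $e^{A+B}=e^{A}e^{B}\in G$; since $\mathcal{A}$ is a vector space we also have $A+B\in\mathcal{A}$, giving $A+B\in\mathrm{g}$. For negation, commutativity gives $e^{-A}=(e^{A})^{-1}$, which lies in $G$ since $G$ is closed under inverses of its invertible elements, and $-A\in\mathcal{A}$, so $-A\in\mathrm{g}$; finally $e^{0}=I_{n+1}\in\widetilde{G}$, indeed $I_{n+1}\in\mathbb{C}I_{n+1}$, yields $0\in\mathrm{g}$. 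The identical argument applied to $\widetilde{G}$, which by construction contains $\mathbb{C}I_{n+1}$ and hence all the scalar directions, shows that $\widetilde{\mathrm{g}}$ is an additive subgroup as well, completing the reduction set up at the start.
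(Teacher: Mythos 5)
Your core argument is the right one, and it is essentially the argument behind the reference that the paper itself leans on: the paper gives no proof of this lemma at all, it simply cites Lemma 4.1 of \cite{aAh-M05}, which is proved there in the setting where $G$ is an abelian \emph{group}. Your commutation principle for $A,B\in P\mathcal{K}_{\eta,r}(\mathbb{C})P^{-1}$ (that $e^{A}e^{B}=e^{B}e^{A}$ forces $AB=BA$, via the splitting $A=\lambda I+N_{A}$ and the fact that the logarithm of a unipotent matrix is a terminating polynomial in that matrix, so commuting unipotents have commuting logarithms) is exactly the mechanism used there, and your deduction of closure under addition from it is correct: $A,B\in\mathrm{g}$ gives $e^{A}e^{B}=e^{B}e^{A}$, hence $AB=BA$, hence $e^{A+B}=e^{A}e^{B}\in G$ and $A+B\in\mathrm{g}$.

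The genuine gap is in your inverse and identity steps, and it is not repairable in the setting of this paper. Here $\mathcal{G}$ is only an abelian sub-\emph{semigroup}, so $G=\Phi(\mathcal{G})$ and $\widetilde{G}$ are sub-semigroups of $GL(n+1,\mathbb{C})$; your claim that ``$G$ is closed under inverses of its invertible elements'' is false for semigroups (that closure is exactly what distinguishes a group from a semigroup), and your parenthetical $I_{n+1}\in\mathbb{C}I_{n+1}$ only yields $0\in\widetilde{\mathrm{g}}$, not $0\in\mathrm{g}$, since $\mathrm{g}$ is defined from $G$ and a semigroup need not contain $I_{n+1}$. Indeed the subgroup assertion itself fails for semigroups: take $n=1$ and $\mathcal{G}$ the semigroup generated by $x\mapsto 2x$, so that $G=\{\mathrm{diag}(1,2^{k}):k\geq 1\}$, $P=I_{2}$, and $\mathrm{g}=\{\mathrm{diag}(2\pi i m,\,k\log 2+2\pi i l):m,l\in\mathbb{Z},\ k\geq 1\}$; this set is closed under addition but contains neither $0$ nor the negative of any of its elements (that would force $2^{-k}\in\{2^{k'}:k'\geq 1\}$), and the analogous computation shows $\widetilde{\mathrm{g}}$ is not closed under negation either. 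The ``subgroup'' wording of the statement is inherited verbatim from the group setting of \cite{aAh-M05}; what is true for semigroups, and all that the paper actually invokes later (see the introduction and the proofs of Lemma~\ref{L:01234} and Lemma~\ref{LL0L:1}, which use only closure under addition), is that $\mathrm{g}$, $\widetilde{\mathrm{g}}$, and hence their images $\mathrm{g}_{v_{0}}$, $\widetilde{\mathrm{g}}_{v_{0}}$ under the linear map $B\mapsto Bv_{0}$, are additive sub-semigroups --- and that much your addition argument does establish correctly.
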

\medskip

Recall that $\mathrm{g}^{1}=\mathrm{g}\cap \Psi(MA(n,
\mathbb{C}))$ and $\mathfrak{q}=\Psi^{-1}(\mathrm{g}^{1})\subset
MA(n, \mathbb{C})$.
\medskip

\begin{lem}\label{L:01234} Under the notations above, one has:\
\begin{itemize}
  \item [(i)] $\widetilde{\mathrm{g}}=\mathrm{g}^{1}+\mathbb{C}I_{n+1}$.\
  \item [(ii)] $\{0\}\times\mathfrak{q}_{w_{0}}=\mathrm{g}^{1}_{v_{0}}$.
 \end{itemize}
\end{lem}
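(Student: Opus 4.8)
The plan is to handle the two parts separately: (ii) is a direct matrix computation, while (i) amounts to transporting the adjunction of $\mathbb{C}I_{n+1}$ from the multiplicative side ($G\rightsquigarrow\widetilde G$) to the additive side ($\mathrm g^{1}\rightsquigarrow\widetilde{\mathrm g}$) through $\exp$, using Corollary~\ref{C:21012}. For (ii), recall $\mathrm g^{1}=\Psi(\mathfrak q)$ and $v_{0}=(1,w_{0})$. For $f=(A,a)\in\mathfrak q$ I would simply evaluate the action of $\Psi(f)$ on $v_{0}$:
$$\Psi(f)\,v_{0}=\begin{bmatrix}0&0\\ a&A\end{bmatrix}\begin{bmatrix}1\\ w_{0}\end{bmatrix}=\begin{bmatrix}0\\ Aw_{0}+a\end{bmatrix}=(0,f(w_{0})).$$
As $B=\Psi(f)$ runs over $\mathrm g^{1}$, the map $f=\Psi^{-1}(B)$ runs over $\mathfrak q$, whence $\mathrm g^{1}_{v_{0}}=\{Bv_{0}:B\in\mathrm g^{1}\}=\{(0,f(w_{0})):f\in\mathfrak q\}=\{0\}\times\mathfrak q_{w_{0}}$. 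The only substantive point is that the bottom block of $\Psi(f)v_{0}$ reproduces the affine action $f(w_{0})=Aw_{0}+a$, which is precisely why $v_{0}$ was normalized to lie in $\{1\}\times\mathbb{C}^{n}$.

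For (i) I would prove both inclusions. The inclusion $\mathrm g^{1}+\mathbb{C}I_{n+1}\subseteq\widetilde{\mathrm g}$ is the easy one: from $G\subseteq\widetilde G$ we get $\mathrm g\subseteq\widetilde{\mathrm g}$, hence $\mathrm g^{1}\subseteq\widetilde{\mathrm g}$; and $\exp(\lambda I_{n+1})=e^{\lambda}I_{n+1}\in\mathbb{C}^{*}I_{n+1}\subseteq\widetilde G$ with $\lambda I_{n+1}\in P\mathcal{K}_{\eta,r}(\mathbb{C})P^{-1}$ (scalar matrices lie in every $\mathbb{T}_{n_{j}}$ block), so $\mathbb{C}I_{n+1}\subseteq\widetilde{\mathrm g}$; additivity of $\widetilde{\mathrm g}$ (Lemma~\ref{LL:002}) then yields the inclusion. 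For the reverse inclusion I would use the description $\widetilde G=\mathbb{C}^{*}G\cup\mathbb{C}^{*}I_{n+1}$ of the semigroup generated by $G$ and $\mathbb{C}I_{n+1}$. Given $N\in\widetilde{\mathrm g}$, write $e^{N}=\lambda M$ with $\lambda\in\mathbb{C}^{*}$ and $M\in G$ (or $M=I_{n+1}$), pick $\mu$ with $e^{\mu}=\lambda$, and exploit the commutation $e^{N-\mu I_{n+1}}=e^{-\mu}e^{N}=M$ together with $N-\mu I_{n+1}\in P\mathcal{K}_{\eta,r}(\mathbb{C})P^{-1}$ to conclude $N-\mu I_{n+1}\in\mathrm g$. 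Corollary~\ref{C:21012}, namely $\mathrm g=\mathrm g^{1}+2i\pi\mathbb{Z}I_{n+1}$, then places $N$ in $\mathrm g^{1}+\mathbb{C}I_{n+1}$.

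The hard part will be the sub-case of the reverse inclusion where $e^{N}$ is a scalar matrix, i.e.\ $M=I_{n+1}$: here $e^{N}\in\mathbb{C}^{*}I_{n+1}$ fails to determine $N$ modulo $\mathbb{C}I_{n+1}$. A short computation in the normal form shows that the matrices $C\in P\mathcal{K}_{\eta,r}(\mathbb{C})P^{-1}$ with $e^{C}\in\mathbb{C}^{*}I_{n+1}$ are exactly those of the form $\mathbb{C}I_{n+1}+2i\pi P\,\mathrm{diag}(k_{1}I_{n_{1}},\dots,k_{r}I_{n_{r}})P^{-1}$ with $k_{j}\in\mathbb{Z}$, since each nilpotent off-diagonal part must vanish by injectivity of $\exp$ on nilpotents. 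One must verify that these block-scalar lattice elements still lie in $\mathrm g^{1}+\mathbb{C}I_{n+1}$; this is exactly where the additive structure of $\mathrm g$ (Lemma~\ref{LL:002}) and Corollary~\ref{C:21012} are essential, and it is reassuring that, evaluated at $v_{0}$, these directions produce precisely the lattice $2i\pi\sum_{k=2}^{r}\mathbb{Z}\,p_{2}(Pe^{(k)})$ that reappears in Theorem~\ref{T:2}(iii). I expect this scalar/block bookkeeping, rather than the main line of either inclusion, to be the delicate point of the proof.
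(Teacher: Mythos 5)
Your proposal is correct and, in its main line, is essentially the paper's proof: part (ii) is the identical one--line computation of $\Psi(f)v_{0}$, and for part (i) the paper likewise gets the easy inclusion from $\mathrm{g}^{1}\subset\widetilde{\mathrm{g}}$, $\mathbb{C}I_{n+1}\subset\widetilde{\mathrm{g}}$ and the additivity of $\widetilde{\mathrm{g}}$ (Lemma~\ref{LL:002}), and the hard inclusion by writing $e^{B}=\lambda A$ with $A\in G$, subtracting a logarithm $\mu I_{n+1}$ of $\lambda$ to land in $\exp^{-1}(G)\cap P\mathcal{K}_{\eta,r}(\mathbb{C})P^{-1}=\mathrm{g}$, and then invoking Corollary~\ref{C:21012} to pass from $\mathrm{g}$ to $\mathrm{g}^{1}+2i\pi\mathbb{Z}I_{n+1}\subset\mathrm{g}^{1}+\mathbb{C}I_{n+1}$. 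The only genuine divergence is your separate treatment of the scalar sub-case $e^{N}\in\mathbb{C}^{*}I_{n+1}$: the paper silently writes every element of $\widetilde{G}$ as $\lambda A$ with $A\in G$, which for a semigroup is legitimate only because $I_{n+1}\in G$ --- and this does hold in the paper's framework, since $\mathrm{g}$ is an additive group by Lemma~\ref{LL:002}, hence $0\in\mathrm{g}$ and $I_{n+1}=e^{0}\in G$. Once you note this, the sub-case you isolate collapses into the main case with $A=I_{n+1}$, so the block-by-block classification of the $C$ with $e^{C}$ scalar, though correct, is unnecessary, and the ``delicate bookkeeping'' you anticipate is just the same two-line argument again. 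Your instinct that pure scalars need a word is sound (the paper elides it, and without $I_{n+1}\in G$ the verification you defer would actually fail), but the clean way to discharge it is via $0\in\mathrm{g}$ rather than via the lattice of block-scalar matrices.
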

\medskip

\begin{proof} $(i)$ Let $B\in \widetilde{\mathrm{g}}$, then $e^{B}\in \widetilde{G}$. One can write $e^{B}=\lambda A$ for some
 $\lambda\in \mathbb{C}^{*}$ and $A\in G$. Let $\mu\in \mathbb{C}$ such that $e^{\mu}=\lambda$, then $e^{B-\mu I_{n+1}}=A$.
 Since $B-\mu I_{n+1}\in
  P\mathcal{K}_{\eta,r}(\mathbb{C})P^{-1}$, so $B-\mu I_{n+1}\in exp^{-1}(G)\cap P\mathcal{K}_{\eta,r}(\mathbb{C})P^{-1}=\mathrm{g}$.
   By Corollary~\ref{C:21012}, there exists $k\in \mathbb{Z}$ such that
   $B':=B-\mu I_{n+1}+2ik\pi I_{n+1}\in \mathrm{g}^{1}$. Then
  $B\in \mathrm{g}^{1}+\mathbb{C}I_{n+1}$ and hence
   $\widetilde{\mathrm{g}}\subset \mathrm{g}^{1}+\mathbb{C}I_{n+1}$. Since $\mathrm{g}^{1}\subset \widetilde{\mathrm{g}}$
    and   $\mathbb{C}I_{n+1}\subset \widetilde{\mathrm{g}}$, it follows that
    $\mathrm{g}^{1}+\mathbb{C}I_{n+1} \subset \widetilde{\mathrm{g}}$ (since $\widetilde{\mathrm{g}}$
    is an additive group, by Lemma ~\ref{LL:002}). This proves (i).
\
\\
$(ii)$ Since $\Psi(\mathfrak{q})=\mathrm{g}^{1}$ and $v_{0}=(1,
w_{0})$, we obtain for every
 $f=(B,b)\in \mathfrak{q}$, \begin{align*}\Psi(f)v_{0}& =\left[\begin{array}{cc}
                                                                          0 & 0 \\
                                                                          b & B
                                                                        \end{array}
\right]\left[\begin{array}{c}
               1 \\
               w_{0}
             \end{array}
\right]\\ \ & =\left[\begin{array}{c}
               0\\
               b+Bw_{0}
             \end{array}
\right]\\ \ & =\left[\begin{array}{c}
               0\\
               f(w_{0})
             \end{array}
\right].
\end{align*}
Hence $\mathrm{g}^{1}_{v_{0}}=\{0\}\times\mathfrak{q}_{w_{0}}$.

  \end{proof}

\begin{lem}\label{LL0L:1} The following assertions are equivalent:\
\begin{itemize}
  \item [(i)] $\overline{\mathfrak{q}_{w_{0}}}=\mathbb{C}^{n}$.\
  \item [(ii)] $\overline{\mathrm{g}^{1}_{v_{0}}}=\{0\}\times\mathbb{C}^{n}$.\
  \item [(iii)] $\overline{\widetilde{\mathrm{g}}_{v_{0}}}=\mathbb{C}^{n+1}$.\
\end{itemize}
\end{lem}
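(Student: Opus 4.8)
The plan is to establish the three equivalences $(i)\Leftrightarrow(ii)\Leftrightarrow(iii)$ by chaining together the two structural results already proved in Lemma~\ref{L:01234}. The first equivalence $(i)\Leftrightarrow(ii)$ should be essentially immediate: by part $(ii)$ of Lemma~\ref{L:01234} we have the set identity $\mathrm{g}^{1}_{v_{0}}=\{0\}\times\mathfrak{q}_{w_{0}}$, and taking closures in $\mathbb{C}^{n+1}$ commutes with this product decomposition, so $\overline{\mathrm{g}^{1}_{v_{0}}}=\{0\}\times\overline{\mathfrak{q}_{w_{0}}}$. Hence $\overline{\mathfrak{q}_{w_{0}}}=\mathbb{C}^{n}$ holds if and only if $\overline{\mathrm{g}^{1}_{v_{0}}}=\{0\}\times\mathbb{C}^{n}$, which is exactly $(i)\Leftrightarrow(ii)$.

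For the second equivalence $(ii)\Leftrightarrow(iii)$ I would use part $(i)$ of Lemma~\ref{L:01234}, namely $\widetilde{\mathrm{g}}=\mathrm{g}^{1}+\mathbb{C}I_{n+1}$. Applying each side to the vector $v_{0}$ gives $\widetilde{\mathrm{g}}_{v_{0}}=\mathrm{g}^{1}_{v_{0}}+\mathbb{C}v_{0}$, since $(\lambda I_{n+1})v_{0}=\lambda v_{0}$. Because $v_{0}=(1,w_{0})$ has a nonzero first coordinate, the line $\mathbb{C}v_{0}$ is transverse to the hyperplane $\{0\}\times\mathbb{C}^{n}$ and together they span $\mathbb{C}^{n+1}$. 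The forward direction $(ii)\Rightarrow(iii)$ then follows because adding the full complex line $\mathbb{C}v_{0}$ to a set whose closure is already $\{0\}\times\mathbb{C}^{n}$ produces a set dense in the whole sum $(\{0\}\times\mathbb{C}^{n})+\mathbb{C}v_{0}=\mathbb{C}^{n+1}$.

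The main obstacle is the converse direction $(iii)\Rightarrow(ii)$, where from density of $\widetilde{\mathrm{g}}_{v_{0}}=\mathrm{g}^{1}_{v_{0}}+\mathbb{C}v_{0}$ in $\mathbb{C}^{n+1}$ I must recover density of $\mathrm{g}^{1}_{v_{0}}$ within the hyperplane $\{0\}\times\mathbb{C}^{n}$. The point is that elements of $\mathrm{g}^{1}=\mathrm{g}\cap\Psi(MA(n,\mathbb{C}))$ have the form $\Psi(f)=\begin{bmatrix}0&0\\ b&B\end{bmatrix}$, so every vector in $\mathrm{g}^{1}_{v_{0}}$ already lies in $\{0\}\times\mathbb{C}^{n}$ by the computation in Lemma~\ref{L:01234}$(ii)$. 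Thus $\mathrm{g}^{1}_{v_{0}}$ is confined to the hyperplane, and the first-coordinate contribution in $\widetilde{\mathrm{g}}_{v_{0}}$ comes solely from the $\mathbb{C}v_{0}$ summand. The key step is therefore to project along the line $\mathbb{C}v_{0}$: I would introduce the linear projection of $\mathbb{C}^{n+1}$ onto $\{0\}\times\mathbb{C}^{n}$ that kills $v_{0}$, observe it fixes $\mathrm{g}^{1}_{v_{0}}$ pointwise, and note that this continuous surjection carries the dense set $\widetilde{\mathrm{g}}_{v_{0}}$ onto a dense subset of $\{0\}\times\mathbb{C}^{n}$ which coincides with $\mathrm{g}^{1}_{v_{0}}$. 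This yields $(iii)\Rightarrow(ii)$ and closes the cycle of equivalences.
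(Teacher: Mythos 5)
Your proof is correct and follows essentially the same route as the paper: both directions are derived from Lemma~\ref{L:01234} (the identity $\{0\}\times\mathfrak{q}_{w_{0}}=\mathrm{g}^{1}_{v_{0}}$ for $(i)\Leftrightarrow(ii)$, and the decomposition $\widetilde{\mathrm{g}}_{v_{0}}=\mathrm{g}^{1}_{v_{0}}+\mathbb{C}v_{0}$ together with the transversality of $\mathbb{C}v_{0}$ to $\{0\}\times\mathbb{C}^{n}$ for $(ii)\Leftrightarrow(iii)$). The only difference is cosmetic: for $(iii)\Rightarrow(ii)$ the paper extracts a sequence $A_{m}v_{0}=\lambda_{m}v_{0}+B_{m}v_{0}$ and checks $\lambda_{m}\to 0$ by reading off the first coordinate, which is precisely your continuous linear projection along $\mathbb{C}v_{0}$ onto the hyperplane written out pointwise; your formulation is slightly cleaner and equally valid.
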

\medskip

\begin{proof} $(i)\Longleftrightarrow(ii)$  follows from the fact that $\{0\}\times\mathfrak{q}_{w_{0}}=\mathrm{g}^{1}_{v_{0}}$
 (Lemma ~\ref{L:01234},(ii)).
\
\\
$(ii)\Longrightarrow(iii):$ By Lemma ~\ref{L:01234},(ii),
$\widetilde{\mathrm{g}}_{v_{0}}=\mathrm{g}^{1}_{v_{0}}+\mathbb{C}v_{0}$.
    Since $v_{0}=(1,w_{0})\notin \{0\}\times\mathbb{C}^{n}$ and
 $\mathbb{C}I_{n+1}\subset \widetilde{\mathrm{g}}$,
 we obtain $\mathbb{C}v_{0}\subset\widetilde{\mathrm{g}}_{v_{0}}$ and so
 $\mathbb{C}v_{0}\subset\overline{\widetilde{\mathrm{g}}_{v_{0}}}$. Therefore
 $\mathbb{C}^{n+1}=\{0\}\times\mathbb{C}^{n}\oplus \mathbb{C}v_{0}=
 \overline{\mathrm{g}^{1}_{v_{0}}}\oplus \mathbb{C}v_{0}\subset \overline{\widetilde{\mathrm{g}}_{v_{0}}}$
 (since, by Lemma~\ref{LL:002}, $\widetilde{\mathrm{g}}_{v_{0}}$ is an additive sub-semigroup of $\mathbb{C}^{n+1}$).
 Thus $\overline{\widetilde{\mathrm{g}}_{v_{0}}}= \mathbb{C}^{n+1}$.\
 \\
$(iii)\Longrightarrow(ii):$ Let $x\in \mathbb{C}^{n}$, then
$(0,x)\in \overline{\widetilde{\mathrm{g}}_{v_{0}}}$
 and there exists a sequence $(A_{m})_{m\in\mathbb{N}}\subset\widetilde{\mathrm{g}}$ such that $\underset{m\to+\infty}{lim}A_{m}v_{0}=(0,x)$.
 By Lemma~\ref{L:01234}, we can write $A_{m}v_{0}=\lambda_{m}v_{0}+B_{m}v_{0}$ with $\lambda_{m}\in \mathbb{C}$ and
 $B_{m}=\left[\begin{array}{cc}
                0 & 0 \\
                b_{m} & B^{1}_{m}
              \end{array}
 \right]\in\mathrm{g}^{1}$ for every $m\in\mathbb{N}$. Since $B_{m}v_{0}\in\{0\}\times \mathbb{C}^{n}$ for every
 $m\in\mathbb{N}$ then $A_{m}v_{0}=(\lambda_{m},\ b_{m}+B^{1}_{m}w_{0}+\lambda_{m}w_{0})$.
It follows that $\underset{m\to+\infty}{lim}\lambda_{m}=0$ and
$\underset{m\to+\infty}{lim}A_{m}v_{0}=\underset{m\to+\infty}{lim}B_{m}v_{0}=(0,
x)$, thus $(0,x)\in\overline{\mathrm{g}^{1}_{v_{0}}}$. Hence
$\{0\}\times\mathbb{C}^{n}\subset
\overline{\mathrm{g}^{1}_{v_{0}}}$. Since
$\mathrm{g}^{1}\subset\Psi(MA(n, \mathbb{C}))$,
$\mathrm{g}^{1}_{v_{0}}\subset \{0\}\times\mathbb{C}^{n}$ then we
conclude that
$\overline{\mathrm{g}^{1}_{v_{0}}}=\{0\}\times\mathbb{C}^{n}$.
\end{proof}
\medskip

\begin{lem}\label{LL1L:9} Let $x\in\mathbb{C}^{n}$ and $G=\Phi(\mathcal{G})$. The following are equivalent:\
\begin{itemize}
  \item [(i)] $\overline{\mathcal{G}(x)}=\mathbb{C}^{n}$.
  \item [(ii)] $\overline{G(1,x)}=\{1\}\times\mathbb{C}^{n}$.
  \item [(iii)] $\overline{\widetilde{G}(1,x)}=\mathbb{C}^{n+1}$.
\end{itemize}
\end{lem}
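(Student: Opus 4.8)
The plan is to reduce the entire statement to the single computational identity $\Phi(f)(1,x)=(1,f(x))$ for $f=(A,a)\in\mathcal G$, which follows at once from the block product $\begin{bmatrix}1&0\\a&A\end{bmatrix}\begin{bmatrix}1\\x\end{bmatrix}=\begin{bmatrix}1\\Ax+a\end{bmatrix}$. This says precisely that $G(1,x)=\{1\}\times\mathcal G(x)$. Since $y\mapsto(1,y)$ is a homeomorphism of $\mathbb C^{n}$ onto the affine hyperplane $\{1\}\times\mathbb C^{n}$, it carries $\overline{\mathcal G(x)}$ onto $\overline{G(1,x)}$, so $\overline{\mathcal G(x)}=\mathbb C^{n}$ if and only if $\overline{G(1,x)}=\{1\}\times\mathbb C^{n}$. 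This disposes of $(i)\Longleftrightarrow(ii)$ immediately, and the remaining work is $(ii)\Longleftrightarrow(iii)$, which compares the orbit inside the hyperplane with the full orbit of the enlarged semigroup $\widetilde G$.

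First I would record that $\widetilde G(1,x)$ is a cone, i.e. invariant under multiplication by every $\mu\in\mathbb C$: if $w=B(1,x)$ with $B\in\widetilde G$, then $\mu w=\big((\mu I_{n+1})B\big)(1,x)$ and $(\mu I_{n+1})B\in\widetilde G$ because $\mathbb C I_{n+1}\subset\widetilde G$ and $\widetilde G$ is a semigroup. Consequently $\overline{\widetilde G(1,x)}$ is a closed cone containing $\overline{G(1,x)}$. For $(ii)\Rightarrow(iii)$, assuming $\overline{G(1,x)}=\{1\}\times\mathbb C^{n}$, the cone property gives $\mathbb C\cdot(\{1\}\times\mathbb C^{n})\subset\overline{\widetilde G(1,x)}$; the vectors $\lambda(1,y)=(\lambda,\lambda y)$ with $\lambda\neq0$ already fill the set $\mathbb C^{*}\times\mathbb C^{n}$, and approximating $(0,z)$ by $(\lambda_{m},z)=\lambda_{m}(1,z/\lambda_{m})$ with $\lambda_{m}\to0$ fills the remaining slice $\{0\}\times\mathbb C^{n}$. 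Hence $\overline{\widetilde G(1,x)}=\mathbb C^{n+1}$.

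For $(iii)\Rightarrow(ii)$ I would argue on the first coordinate. Every element of $\widetilde G$ has the form $\lambda C$ with $\lambda\in\mathbb C$ and $C\in G\cup\{I_{n+1}\}$, so each point of $\widetilde G(1,x)$ reads $(\lambda,\lambda h(x))$ with $h\in\mathcal G\cup\{\mathrm{id}\}$. Given $z\in\mathbb C^{n}$, choose $w_{m}=(\lambda_{m},\lambda_{m}h_{m}(x))\in\widetilde G(1,x)$ with $w_{m}\to(1,z)$; comparing first coordinates forces $\lambda_{m}\to1$, so $\lambda_{m}\neq0$ eventually and $C_{m}(1,x)=\tfrac1{\lambda_{m}}w_{m}\to(1,z)$. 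If $z\neq x$, the points $(1,x)$ coming from factors $C_{m}=I_{n+1}$ cannot accumulate at $(1,z)$, so infinitely many $C_{m}$ lie in $G$ and yield a sequence in $G(1,x)$ converging to $(1,z)$; thus $\{1\}\times(\mathbb C^{n}\setminus\{x\})\subset\overline{G(1,x)}$. Since this set is dense in $\{1\}\times\mathbb C^{n}$ while $\overline{G(1,x)}$ is closed and contained in $\{1\}\times\mathbb C^{n}$, I conclude $\overline{G(1,x)}=\{1\}\times\mathbb C^{n}$.

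The only delicate points, and hence where I would be most careful, are bookkeeping rather than conceptual: pinning down the precise shape of a general element of the generated semigroup $\widetilde G$ (centrality of the scalars collapses every word to $\lambda C$ with a single $C\in G\cup\{I_{n+1}\}$), and handling the two boundary situations where naive scaling degenerates — the slice $\{0\}\times\mathbb C^{n}$ in $(ii)\Rightarrow(iii)$ and the exceptional point $z=x$ in $(iii)\Rightarrow(ii)$. Both are resolved by the same device of passing to the closure via an auxiliary sequence (respectively $\lambda_{m}\to0$ and the density of $\mathbb C^{n}\setminus\{x\}$), so no genuinely new idea is needed beyond the cone structure of $\widetilde G(1,x)$.
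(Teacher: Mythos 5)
Your proof is correct and follows essentially the same route as the paper: the identity $G(1,x)=\{1\}\times\mathcal{G}(x)$ for $(i)\Leftrightarrow(ii)$, scaling the hyperplane $\{1\}\times\mathbb{C}^{n}$ by $\mathbb{C}^{*}$ and taking closures to recover the slice $\{0\}\times\mathbb{C}^{n}$ for $(ii)\Rightarrow(iii)$, and comparing first coordinates of $\lambda_{m}C_{m}(1,x)\to(1,z)$ to force $\lambda_{m}\to1$ and divide for $(iii)\Rightarrow(ii)$. Your extra care with elements of $\widetilde{G}$ of the form $\lambda I_{n+1}$ (the case $C_{m}=I_{n+1}$ and the exceptional point $z=x$) is a small refinement the paper glosses over, not a different argument.
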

\medskip

\begin{proof} $(i)\Longleftrightarrow (ii):$  is obvious since $\{1\}\times\mathcal{G}(x)=G(1,x)$ by construction.
\
\\
$(iii)\Longrightarrow (ii):$  Let $y\in \mathbb{C}^{n}$ and
$(B_{m})_{m}$ be a sequence in $\widetilde{G}$ such that
$\underset{m\to +\infty}{lim}B_{m}(1,x)=(1,y)$. One can write
$B_{m}=\lambda_{m}\Phi(f_{m})$, with $f_{m}\in \mathcal{G}$ and
$\lambda_{m}\in\mathbb{C}^{*}$, thus $B_{m}(1,x)=(\lambda_{m},\ \
\lambda_{m}f_{m}(x))$, so $\underset{m\to
+\infty}{lim}\lambda_{m}=1$. Therefore, $\underset{m\to
+\infty}{lim}\Phi(f_{m})(1,x)=\underset{m\to
+\infty}{lim}\frac{1}{\lambda_{m}}B_{m}(1,x)=(1,y)$. Hence,
$(1,y)\in \overline{G(1,x)}$. \
\\
$(ii)\Longrightarrow (iii):$ Since
$\mathbb{C}^{n+1}\backslash(\{0\}\times
\mathbb{C}^{n})=\underset{\lambda\in
\mathbb{C}^{*}}{\bigcup}\lambda
\left(\{1\}\times\mathbb{C}^{n}\right)$
 and for every $\lambda\in\mathbb{C}^{*}$, $\lambda G(1, x)\subset \widetilde{G}(1,x)$, we get \begin{align*}
\mathbb{C}^{n+1} & =
\overline{\mathbb{C}^{n+1}\backslash(\{0\}\times
\mathbb{C}^{n})}\\ \ & =\overline{\underset{\lambda\in
\mathbb{C}^{*}}{\bigcup}\lambda
\left(\{1\}\times\mathbb{C}^{n}\right)}\\ \ & =
\overline{\underset{\lambda\in \mathbb{C}^{*}}{\bigcup}\lambda
\overline{G(1,x)}} \subset \overline{\widetilde{G}(1,x)}
\end{align*}
Hence $\mathbb{C}^{n+1}=\overline{\widetilde{G}(1, x)}$.
\end{proof}
\medskip

\begin{prop}\label{LL:LL11010a} Let $G$ be an abelian subsemigroup of $M_{n}(\mathbb{C})$ and $G^{*}=G\cap GL(n, \mathbb{C})$. Then $G$
is locally hypercyclic (resp. hypercyclic) if and only if so is
$G^{*}$.
\end{prop}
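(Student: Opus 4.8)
The nontrivial content is the forward implication from $G$ to $G^{*}$ in each case; the converse is immediate in both, since $G^{*}(v)\subseteq G(v)$ forces $\overline{G^{*}(v)}\subseteq\overline{G(v)}$, so any dense (resp. somewhere dense) orbit of $G^{*}$ is automatically one for $G$. For the forward directions the plan is to first reduce to normal form. By Proposition~\ref{p:1} there is $P\in GL(n,\mathbb{C})$ with $P^{-1}GP\subseteq\mathcal{K}_{\eta,r}(\mathbb{C})$, and since $x\mapsto P^{-1}x$ is a linear homeomorphism conjugating the $G$-action to the $(P^{-1}GP)$-action and carrying $G^{*}$ onto $(P^{-1}GP)\cap GL(n,\mathbb{C})$, both (local) hypercyclicity and the very statement to be proved are invariant under this conjugation. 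Hence I may assume from the outset that $G\subseteq\mathcal{K}_{\eta,r}(\mathbb{C})$.

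Writing $S=G\setminus G^{*}$ for the non-invertible elements, the key step is to show that for every $v\in\mathbb{C}^{n}$ the set $S(v)$ lies in a fixed finite union of hyperplanes. Indeed, each $A\in\mathcal{K}_{\eta,r}(\mathbb{C})$ is block diagonal with $i$-th block in $\mathbb{T}_{n_{i}}(\mathbb{C})$, carrying a single repeated diagonal value $\mu_{i}$, and $A$ is non-invertible exactly when some $\mu_{i}=0$. When $\mu_{i}=0$ the $i$-th block is strictly lower triangular, so its top row vanishes and the coordinate $c_{i}:=n_{1}+\dots+n_{i-1}+1$ of $Ax$ is identically $0$; thus $\mathrm{Im}(A)\subseteq H_{i}:=\{x\in\mathbb{C}^{n}:x_{c_{i}}=0\}$, and in particular $Av\in H_{i}$. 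Since every $A\in S$ has at least one vanishing $\mu_{i}$, this yields $S(v)\subseteq H_{1}\cup\dots\cup H_{r}$, a finite union of proper subspaces, whence $\overline{S(v)}$ is a closed set with empty interior.

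With this in hand the topological conclusion is routine. Fixing $v$ and decomposing $\overline{G(v)}=\overline{G^{*}(v)}\cup\overline{S(v)}$, I would argue as follows. If $G$ is hypercyclic, choose $v$ with $\overline{G(v)}=\mathbb{C}^{n}$; then $\overline{G^{*}(v)}\supseteq\mathbb{C}^{n}\setminus\overline{S(v)}$, which is open and dense because $\overline{S(v)}$ is nowhere dense, so the closed set $\overline{G^{*}(v)}$ equals $\mathbb{C}^{n}$ and $G^{*}$ is hypercyclic. If instead $G$ is locally hypercyclic, choose $v$ and a nonempty open $U$ with $U\subseteq\overline{G(v)}$; then $U\setminus\overline{S(v)}$ is a nonempty open subset of $\overline{G^{*}(v)}$, so $\overline{G^{*}(v)}$ has nonempty interior and $G^{*}$ is locally hypercyclic.

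The main obstacle is the structural step of the second paragraph, namely that the singular part of the orbit is trapped in finitely many hyperplanes. This uses crucially that in the normal form $\mathcal{K}_{\eta,r}(\mathbb{C})$ every diagonal block has a single repeated eigenvalue, so non-invertibility localizes to the vanishing of one block's top coordinate. Mere simultaneous triangularization would not suffice, since a triangular singular matrix may have nonzero entries in the row above a zero diagonal entry and so need not map into a coordinate hyperplane; it is precisely the block form furnished by Proposition~\ref{p:1} that makes $\overline{S(v)}$ nowhere dense, after which the Baire-type argument above goes through without difficulty.
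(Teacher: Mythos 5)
Your proposal is correct and follows essentially the same route as the paper: reduce to the normal form $\mathcal{K}_{\eta,r}(\mathbb{C})$ via Proposition~\ref{p:1}, observe that every non-invertible element has some block with $\mu_{i}=0$ and hence maps into one of the $r$ coordinate hyperplanes $H_{k}$, and conclude by a nowhere-density argument that the invertible part $G^{*}$ must already account for any (somewhere) dense orbit. Your write-up actually makes explicit two points the paper leaves terse, namely the invariance of the statement under conjugation and the final Baire-type step extracting a nonempty open subset of $\overline{G^{*}(v)}$ from $U\setminus\overline{S(v)}$.
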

\smallskip

\begin{proof} Suppose that  $\overset{\circ}{\overline{G^{*}(u)}}\neq \emptyset$, for some $u\in
\mathbb{K}^{n}$. Then
$\emptyset\neq\overset{\circ}{\overline{G^{*}(u)}}\subset
\overset{\circ}{\overline{G(u)}}$ \ and so \
$\overset{\circ}{\overline{G(u)}}\neq \emptyset$. Conversely,
suppose that $\overset{\circ}{\overline{G(u)}}\neq \emptyset$, for
some $u\in\mathbb{C}^{n}$. By proposition 2.1, one can suppose
that $G$ is an abelian sub-semigroup of
$\mathcal{K}_{\eta,r}(\mathbb{C})$. Write
$G^{\prime}:=(G\backslash G^{*})\cup \{I_{n}\}$.  then
$G^{\prime}$ is a sub-semigroup of $G$.

- If \ $G^{\prime}=\{I_{n}\}$  then   $G=G^{*}$ and so $G^{*}$ is
locally hypercyclic.

-  If \ $G^{\prime}\neq \{I_{n}\}$ then

$$G(u)\subset\left(\underset{A\in (G^{\prime}\backslash \{I_{n}\})
}{\bigcup}Im(A)\right)\cup G^{*}(u).$$

As every $A\in (G^{\prime}\backslash\{I_{n}\})$,  is non
invertible, then $Im(A)\subset
\underset{k=1}{\overset{r}{\bigcup}}H_{k}$ where
$$H_{k}:=\left\{u=[u_{1},\dots,u_{r}]^{T}\in \mathbb{C}^{n},\
  u_{j}\in\mathbb{C}^{n_{j}},\  u_{k}\in\{0\}\times\mathbb{C}^{n_{k}-1}
 \begin{array}{c}
  1\leq j \leq r, \\
 j\neq k\
\end{array}\right\}.$$
  It follows that
  $$G(u)\subset\left(\underset{k=1}{\overset{r}{\bigcup}}H_{k}\right)\cup G^{*}(u), $$
and so  $$\overline{G(u)}\subset\left(
\underset{k=1}{\overset{r}{\bigcup}}H_{k}\right)\cup
\overline{G^{*}(u)}.$$

 Since  dim$H_{k}=n-1$, \ $\overset{\circ}{H_{k}}=\emptyset$, for
every \ $1\leq k \leq r$ and therefore \
$\overset{\circ}{\overline{G^{*}(u)}}\neq\emptyset$.
\end{proof}
\medskip

\begin{lem} Let $G$ be an abelian subsemigroup of $\mathcal{K}_{\eta,r}(\mathbb{C})$, $G^{*}=G\cap GL(n, \mathbb{C})$
and $\mathrm{g}^{*} =exp^{-1}(G^{*})\cap
\mathcal{K}_{\eta,r}(\mathbb{C})$. Then
$\mathrm{g}=\mathrm{g}^{*}$.
\end{lem}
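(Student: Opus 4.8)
The plan is to establish the two inclusions $\mathrm{g}^{*}\subseteq \mathrm{g}$ and $\mathrm{g}\subseteq \mathrm{g}^{*}$ separately. First I would unwind the definitions in the present normal-form setting: since $G$ already lies inside $\mathcal{K}_{\eta,r}(\mathbb{C})$, the conjugating matrix is trivial ($P=I_{n}$), so that $\mathrm{g}=\exp^{-1}(G)\cap \mathcal{K}_{\eta,r}(\mathbb{C})$ while $\mathrm{g}^{*}=\exp^{-1}(G^{*})\cap \mathcal{K}_{\eta,r}(\mathbb{C})$. Both sets are carved out of the \emph{same} subspace $\mathcal{K}_{\eta,r}(\mathbb{C})$ by an exponential-preimage condition, so the whole problem reduces to comparing $\exp^{-1}(G)$ with $\exp^{-1}(G^{*})$.

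The inclusion $\mathrm{g}^{*}\subseteq \mathrm{g}$ is immediate: from $G^{*}=G\cap GL(n,\mathbb{C})\subseteq G$ I get $\exp^{-1}(G^{*})\subseteq \exp^{-1}(G)$, and intersecting both with $\mathcal{K}_{\eta,r}(\mathbb{C})$ yields $\mathrm{g}^{*}\subseteq \mathrm{g}$.

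For the reverse inclusion, which is the real content, I would invoke the elementary fact that the matrix exponential takes its values in $GL(n,\mathbb{C})$: for every $M\in M_{n}(\mathbb{C})$ the matrix $e^{M}$ is invertible, with inverse $e^{-M}$. Hence, given $M\in \mathrm{g}$, the relation $e^{M}\in G$ forces $e^{M}$ to be an invertible element of $G$, i.e. $e^{M}\in G\cap GL(n,\mathbb{C})=G^{*}$. This says $M\in \exp^{-1}(G^{*})$, and since $M$ already belongs to $\mathcal{K}_{\eta,r}(\mathbb{C})$ by hypothesis, I conclude $M\in \mathrm{g}^{*}$. Combining the two inclusions gives $\mathrm{g}=\mathrm{g}^{*}$.

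I do not expect a genuine obstacle here: the entire point is the observation that $\exp$ never outputs a singular matrix, so the non-invertible elements of $G$ simply do not arise as exponentials and are therefore invisible to $\exp^{-1}$. The only care needed is bookkeeping, namely confirming that the normal-form hypothesis makes $P=I_{n}$, so that $\mathrm{g}$ and $\mathrm{g}^{*}$ are intersected with one and the same copy of $\mathcal{K}_{\eta,r}(\mathbb{C})$ rather than two different conjugates of it.
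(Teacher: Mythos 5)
Your proof is correct and rests on exactly the same observation as the paper's: the matrix exponential only produces invertible matrices, so the non-invertible elements of $G$ contribute nothing to $\exp^{-1}(G)$. The paper phrases this by splitting $G$ into $G^{*}$ and $G'=G\setminus G^{*}$ and noting $\exp^{-1}(G')=\emptyset$, while you phrase it as two inclusions; the content is identical.
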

\medskip

\begin{proof} Let
$G'=G\backslash G^{*}$. Since $e^{A}\in GL(n, \mathbb{C})$ for
every $A\in M_{n}(\mathbb{C})$ and $G'\subset
M_{n}(\mathbb{C})\backslash GL(n, \mathbb{C})$  then \
$exp^{-1}(G^{*})=\emptyset$. As \
$\mathrm{g}=\left(exp^{-1}(G')\cap
\mathcal{K}_{\eta,r}(\mathbb{C})\right)\cup \mathrm{g}^{*}$ \ then
\ \ $\mathrm{g}=\mathrm{g}^{*}$.
\end{proof}
\medskip
\
\\
{\it Proof of Theorem ~\ref{T:1}.}

By Proposition 2.1, one can suppose that $G$ is an abelian
subsemigroup of $\mathcal{K}_{\eta,r}(\mathbb{C})$. We let $G^{*}
= G\cap GL(n, \mathbb{C})$, $\mathrm{g}^{*}=exp^{-1}(G^{*})\cap
\mathcal{K}_{\eta,r}(\mathbb{C})$ and
$(\mathrm{g}^{*})_{u_{0}}=\{Bu_{0},\ \ B\in \mathrm{g}^{*}\}$. By
applying Proposition 2.7 on $G^{*}$, the orbit $G^{*}(u_{0})$ is
dense in $\mathbb{C}^{n}$ if and only if
$(\mathrm{g}^{*})_{u_{0}}$ is an additive subsemigroup, locally
dense in $\mathbb{C}^{n}$. By Proposition 4.2, Lemma 4.3 and
Proposition 4.6, Theorem 1.1 is proved.

..................................

$(ii)\Longrightarrow(i):$ is obvious.\
\\
$(i)\Longrightarrow(ii):$ Suppose that $\mathcal{G}$ is
hypercyclic, so $\overline{\mathcal{G}(x)}=\mathbb{C}^{n}$ for
some $x\in\mathbb{C}^{n}$. By Lemma~\ref{LL1L:9},(iii),
$\overline{\widetilde{G}(1,x)}=\mathbb{C}^{n+1}$ and by Theorem
~\ref{T:5}, $\overline{\widetilde{G}(v_{0})}=\mathbb{C}^{n+1}$.
Then by Lemma~\ref{LL1L:9},
$\overline{\mathcal{G}(w_{0})}=\mathbb{C}^{n}$, since
$v_{0}=(1,w_{0})$.
\\
$(ii)\Longrightarrow(iii):$ Suppose that
$\overline{\mathcal{G}(w_{0})}=\mathbb{C}^{n}$. By
Lemma~\ref{LL1L:9},
$\overline{\widetilde{G}(v_{0})}=\mathbb{C}^{n+1}$ and by Theorem
~\ref{T:5},
$\overline{\widetilde{\mathrm{g}}_{v_{0}}}=\mathbb{C}^{n+1}$. Then
by Lemma~\ref{LL0L:1},
$\overline{\mathfrak{q}_{w_{0}}}=\mathbb{C}^{n}$.\
\\
$(iii)\Longrightarrow(ii):$ Suppose that
$\overline{\mathfrak{q}_{w_{0}}}=\mathbb{C}^{n}$.\ By
Lemma~\ref{LL0L:1},
 $\overline{\widetilde{\mathrm{g}}_{v_{0}}}=\mathbb{C}^{n+1}$ and by Theorem ~\ref{T:5}, $\overline{\widetilde{G}(v_{0})}=\mathbb{C}^{n+1}$.
 Then by Lemma~\ref{LL1L:9}, $\overline{\mathcal{G}(w_{0})}=\mathbb{C}^{n}$. $\hfill{\Box}$
\
\\
\\
{\it Proof of Corollary ~\ref{C0C:1}.} Assume that \
$\mathcal{G}\subset GL(n, \mathbb{C})$ then take
 $P=\mathrm{diag}(1, Q)$ and $G=\Phi(\mathcal{G})$, then $P^{-1}GP\subset
\mathcal{K}_{\eta,r'+1}(\mathbb{C})$ where $\eta=(1,n'_{1},\dots,
n'_{r'})$. Hence $u_{0}=(1, u'_{0})$, $v_{0}=Pu_{0}=(1,Qu'_{0})$
and thus $w_{0}=Qu'_{0}=v'_{0}$. Every $f=(A,0)\in \mathcal{G}$ is
simply noted $A$. Then for every $A\in \mathcal{G}$,
$\Phi(A)=\mathrm{diag}(1, A)$. We can verify that
$\mathrm{g}^{1}=\{\mathrm{diag}(0, B):\  B\in \mathrm{g}'\}$ where
$\mathrm{g}'=exp^{-1}(\mathcal{G})\cap
Q(\mathcal{K}_{\eta',r'}(\mathbb{C}))Q^{-1}$ and so
$\mathfrak{q}=\Psi^{-1}(\mathrm{g}^{1})=\mathrm{g}'$. Hence the
proof of Corollary~\ref{C0C:1} follows directly from Theorem
~\ref{T:1}.$\hfill{\Box}$.
\medskip

\section{ Finitely  generated  subgroups}
\
\\
Recall the following result proved in \cite{aA-Hm12} which applied
to $G$ can be stated as following:
\medskip

\begin{prop}\label{p:10}$($\cite{aA-Hm12}, Proposition 5.1$)$ Let $G$ be an abelian sub-semigroup of $M_{n}(\mathbb{C})$ such that $G^{*}$ is generated by $A_{1},\dots,A_{p}$
and let  $B_{1},\dots,B_{p}\in \mathrm{g}$ such that  $A_{k} =
e^{B_{k}}$, $k = 1,\dots,p$ and $P\in GL(n+1,\mathbb{C})$
satisfying $P^{-1}GP\subset \mathcal{K}_{\eta,r}(\mathbb{C})$.
Then:
 $$\mathrm{g} =
\underset{k=1}{\overset{p}{\sum}}\mathbb{N}B_{k}+2i\pi\underset{k=1}{\overset{r}{\sum}}\mathbb{Z}PJ_{k}P^{-1}
\ \  \mathrm{and} \ \
 \ \mathrm{g}_{v_{0}} = \underset{k=1}{\overset{p}{\sum}}\mathbb{N}B_{k}v_{0} + \underset{k=1}{\overset{r}{\sum}} 2i\pi \mathbb{Z}Pe^{(k)},$$
where $J_{k} =\mathrm{diag}(J_{k,1},\dots, J_{k,r})$ with \
$J_{k,i}=0\in \mathbb{T}_{n_{i}}(\mathbb{C})$ if $i\neq k$ and
$J_{k,k} = I_{n_{k}}$.
 \end{prop}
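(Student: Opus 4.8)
The plan is to establish the two displayed formulas for $\mathrm{g}$ and $\mathrm{g}_{v_0}$ by exploiting the fact that exponentiation turns the multiplicative structure of $G^*$ into an additive structure on $\mathrm{g}$, together with the previously-recorded description of the kernel of $\exp$ on $\mathcal{K}_{\eta,r}(\mathbb{C})$. First I would reduce to the normal form: by Proposition~\ref{p:1} (or the hypothesis on $P$) I may assume $G$ is an abelian sub-semigroup of $\mathcal{K}_{\eta,r}(\mathbb{C})$ and $P=I_{n+1}$, since conjugation by $P$ is compatible with every object in sight; the general formulas then follow by conjugating back. By Lemma~\ref{LL:002}, $\mathrm{g}$ is an additive subgroup of $M_{n+1}(\mathbb{C})$, and by Lemma~\ref{L:21013} we have $\exp(\mathrm{g})=G^*$.

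The core inclusion "$\supseteq$" is the routine direction. Since $A_k=e^{B_k}$ with $B_k\in\mathrm{g}$ and $\mathrm{g}$ is an additive \emph{semigroup} (indeed group), every $\mathbb{N}$-combination $\sum_k m_k B_k$ lies in $\mathrm{g}$. For the terms $2i\pi J_k$, I would observe that $e^{2i\pi J_k}=I_{n+1}$ (because each $J_k$ is a block-diagonal idempotent projecting onto the $k$-th block, so $2i\pi J_k$ is diagonalizable with eigenvalues in $2i\pi\mathbb{Z}$), so $2i\pi J_k\in\exp^{-1}(I_{n+1})\cap\mathcal{K}_{\eta,r}(\mathbb{C})\subset\mathrm{g}$; additivity of $\mathrm{g}$ then absorbs all $\mathbb{Z}$-combinations. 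This gives $\sum_k\mathbb{N}B_k+2i\pi\sum_k\mathbb{Z}J_k\subseteq\mathrm{g}$.

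The substantive direction is "$\subseteq$". Take $B\in\mathrm{g}$, so $e^B\in G^*=\langle A_1,\dots,A_p\rangle$, say $e^B=A_1^{m_1}\cdots A_p^{m_p}=e^{\sum_k m_k B_k}$ with $m_k\in\mathbb{N}$. Then $B-\sum_k m_k B_k$ lies in the kernel $\exp^{-1}(I_{n+1})\cap\mathcal{K}_{\eta,r}(\mathbb{C})$, and the point is to show this kernel equals exactly $2i\pi\sum_k\mathbb{Z}J_k$. Here I would use the block-triangular structure: on each block $\mathbb{T}_{n_i}(\mathbb{C})$ a matrix $N$ with $e^N=I_{n_i}$ must have its (necessarily scalar) diagonal entry $\mu_i$ satisfy $e^{\mu_i}=1$, forcing $\mu_i\in 2i\pi\mathbb{Z}$, while the nilpotent part must vanish since $e^N=I$ together with commuting Jordan decomposition forces the nilpotent summand to be zero. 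Assembling the blocks gives $N=2i\pi\sum_k k_i J_k$ with $k_i\in\mathbb{Z}$, which is exactly the claimed kernel. I anticipate this kernel computation is the main obstacle, as it requires carefully justifying that the semisimple and nilpotent parts of $N$ separately lie in the kernel and that the nilpotent part is forced to vanish. Finally, applying these matrices to $v_0$ and using $e^{(k)}=J_k u_0$ (so $PJ_kP^{-1}v_0=PJ_ku_0=Pe^{(k)}$) converts the formula for $\mathrm{g}$ into the stated formula for $\mathrm{g}_{v_0}$, with the $\mathbb{Z}$-coefficients on $2i\pi Pe^{(k)}$ inherited directly.
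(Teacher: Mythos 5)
The paper does not actually prove Proposition~\ref{p:10}: it is imported verbatim from \cite{aA-Hm12} (Proposition 5.1), so there is no in-house argument to compare yours against. Your reconstruction is, as far as I can tell, the standard and correct one: reduce to $P=I_{n+1}$, use Lemmas~\ref{LL:002} and~\ref{L:21013} to get additivity of $\mathrm{g}$ and $\exp(\mathrm{g})=G^{*}$, and then identify $\exp^{-1}(I_{n+1})\cap\mathcal{K}_{\eta,r}(\mathbb{C})$ with $2i\pi\sum_{k}\mathbb{Z}J_{k}$ via the decomposition $N=\mu I+(\text{nilpotent})$ on each block (a unipotent matrix equal to a scalar one is the identity, and $\exp$ is injective on nilpotents), after which evaluation at $v_{0}$ and $J_{k}u_{0}=e^{(k)}$ give the second formula. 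The one step you pass over too quickly is the identity $A_{1}^{m_{1}}\cdots A_{p}^{m_{p}}=e^{\sum_{k}m_{k}B_{k}}$ (and, in the easy inclusion, the claim that $\sum_{k}m_{k}B_{k}\in\mathrm{g}$): both require the $B_{k}$ to pairwise commute, which is \emph{not} automatic for arbitrary elements of $\mathcal{K}_{\eta,r}(\mathbb{C})$. This commutativity is exactly what the proof of Lemma~\ref{LL:002} in the cited source establishes (logarithms of commuting elements of $\mathcal{K}^{*}_{\eta,r}(\mathbb{C})$ taken inside $\mathcal{K}_{\eta,r}(\mathbb{C})$ commute), so your argument is complete once you invoke that lemma for this purpose explicitly rather than only for additivity; with that said, I see no substantive gap.
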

\medskip

\begin{prop} \label{p:11} Let  $\mathcal{G}$ be an abelian sub-semigroup of  $GA(n, \mathbb{C})$ such that $\mathcal{G}^{*}$ is generated by
 $f_{1},\dots,f_{p}$ and let  $f'_{1},\dots,f'_{p}\in
\mathfrak{q}$ such that $e^{\Psi(f'_{k})}=\Phi(f_{k})$, $k =
1,..,p$.  Let $P$ be as in Proposition~\ref{p:2}. Then:
 $$\mathfrak{q}_{w_{0}}=\left\{\begin{array}{c}
                                     \underset{k=1}{\overset{p}{\sum}}\mathbb{N}f'_{k}(w_{0}) + \underset{k=2}{\overset{r}{\sum}} 2i\pi \mathbb{Z}p_{2}(Pe^{(k)}),\ if\ r\geq2 \\
                                     \underset{k=1}{\overset{p}{\sum}}\mathbb{N}f'_{k}(w_{0}) , \ if\ r=1\ \ \ \ \ \ \ \ \  \ \ \ \ \ \ \ \ \ \  \ \ \ \ \ \ \ \
                                   \end{array}
\right.$$
 \end{prop}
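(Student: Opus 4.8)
The plan is to transport the explicit description of the linear orbit $\mathrm{g}_{v_0}$ furnished by Proposition~\ref{p:10} down to the affine setting via the map $\Psi$ and the second projection $p_2$. First I would apply Proposition~\ref{p:10} to the semigroup $G=\Phi(\mathcal{G})$: since $\mathcal{G}^\ast$ is generated by $f_1,\dots,f_p$, the semigroup $G^\ast$ is generated by $A_k:=\Phi(f_k)$, and the chosen $B_k:=\Psi(f'_k)\in\mathrm{g}$ satisfy $e^{B_k}=\Phi(f_k)=A_k$. Proposition~\ref{p:10} then gives
$$\mathrm{g}_{v_0}=\underset{k=1}{\overset{p}{\sum}}\mathbb{N}B_k v_0+\underset{k=1}{\overset{r}{\sum}}2i\pi\mathbb{Z}Pe^{(k)}.$$

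Next I would pass from $\mathrm{g}_{v_0}$ to $\mathrm{g}^{1}_{v_0}$ and then to $\mathfrak{q}_{w_0}$. By Lemma~\ref{L:01234}(ii) one has $\{0\}\times\mathfrak{q}_{w_0}=\mathrm{g}^{1}_{v_0}$, and the computation $\Psi(f'_k)v_0=(0,f'_k(w_0))$ carried out there shows $B_k v_0=(0,f'_k(w_0))$, so that $p_2(B_k v_0)=f'_k(w_0)$. It remains to understand the contribution of the terms $2i\pi\mathbb{Z}Pe^{(k)}$. The key point is that $\mathrm{g}^{1}=\mathrm{g}\cap\Psi(MA(n,\mathbb{C}))$ and, by Corollary~\ref{C:21012}, $\mathrm{g}=\mathrm{g}^{1}+2i\pi\mathbb{Z}I_{n+1}$. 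Since $Pe^{(1)}=Pu_0$ projects (under $\Phi^{-1}$-type normalization) onto the diagonal direction corresponding to $I_{n+1}$, the lattice generator $Pe^{(1)}$ is exactly the part absorbed into the $\mathbb{C}I_{n+1}$ (equivalently $2i\pi\mathbb{Z}I_{n+1}$) factor that is stripped off in passing to $\mathrm{g}^{1}$; this is why the sum over $k$ in the affine formula starts at $k=2$. Applying $p_2$ to $2i\pi\mathbb{Z}Pe^{(k)}$ gives the stated generators $2i\pi\mathbb{Z}p_2(Pe^{(k)})$ for $k=2,\dots,r$, and when $r=1$ no such terms survive, yielding the second branch.

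I would therefore argue that $\mathrm{g}^{1}_{v_0}=\{0\}\times\big(\sum_{k=1}^{p}\mathbb{N}f'_k(w_0)+\sum_{k=2}^{r}2i\pi\mathbb{Z}p_2(Pe^{(k)})\big)$ by intersecting the description of $\mathrm{g}_{v_0}$ with $\{0\}\times\mathbb{C}^n=p_2^{-1}$-image of $\Psi(MA(n,\mathbb{C}))$-orbits, and then read off $\mathfrak{q}_{w_0}$ via the identification $\{0\}\times\mathfrak{q}_{w_0}=\mathrm{g}^{1}_{v_0}$. The main obstacle I anticipate is the careful bookkeeping for the $k=1$ term: one must verify that $2i\pi\mathbb{Z}Pe^{(1)}$ is precisely the component lying in the $\mathbb{C}I_{n+1}$ direction which does \emph{not} lie in $\Psi(MA(n,\mathbb{C}))$, so that intersecting with $\mathrm{g}^{1}$ removes exactly this generator while keeping the others. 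This hinges on the structure of $e^{(k)}=(e^{(k)}_1,\dots,e^{(k)}_r)$ and on $v_0=Pu_0$ having first coordinate $1$; making the interplay between $\mathrm{g}^{1}=\mathrm{g}\cap\Psi(MA(n,\mathbb{C}))$ and Lemma~\ref{L:4} fully rigorous is the delicate step, after which the identities for $f'_k(w_0)$ and $p_2(Pe^{(k)})$ follow by direct substitution.
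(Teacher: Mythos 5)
Your overall route is the paper's: apply Proposition~\ref{p:10} to $G=\Phi(\mathcal{G})$ with $A_k=\Phi(f_k)$ and $B_k=\Psi(f'_k)$, discard the $k=1$ lattice generator in passing to $\mathrm{g}^{1}=\mathrm{g}\cap\Psi(MA(n,\mathbb{C}))$, and then push down by $p_2$ using $\{0\}\times\mathfrak{q}_{w_0}=\mathrm{g}^{1}_{v_0}$ and $\Psi(f'_k)v_0=(0,f'_k(w_0))$. The only structural difference is cosmetic: the paper performs the intersection at the matrix level (computing $\mathrm{g}^{1}$ from $\mathrm{g}$ before evaluating at $v_0$), while you intersect the vector set $\mathrm{g}_{v_0}$ with $\{0\}\times\mathbb{C}^{n}$; both work, though your version needs the small extra observation that $\mathrm{g}^{1}_{v_0}=\mathrm{g}_{v_0}\cap(\{0\}\times\mathbb{C}^{n})$, which follows from Corollary~\ref{C:21012} together with the fact that $v_0$ has first coordinate $1$.

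However, your justification of the step you yourself single out as delicate --- why exactly the $k=1$ generator and no other is discarded --- rests on a false identity. You write $Pe^{(1)}=Pu_0$; but $u_0=\sum_{k=1}^{r}e^{(k)}$, so $e^{(1)}\neq u_0$ whenever $r\geq 2$, and the direction corresponding to $\mathbb{C}I_{n+1}$ at the vector level is $v_0=Pu_0$, not $Pe^{(1)}$. The correct reason is the one the paper gives: $J_1=\mathrm{diag}(1,J')$ has $(1,1)$-entry $1$, so $mPJ_1P^{-1}\notin\Psi(MA(n,\mathbb{C}))$ for every $m\in\mathbb{Z}\setminus\{0\}$ (conjugation by $P\in\Phi(GA(n,\mathbb{C}))$ preserves the $(1,1)$-entry), whereas $J_k\in\Psi(MA(n,\mathbb{C}))$ and hence $PJ_kP^{-1}\in\Psi(MA(n,\mathbb{C}))$ for $k\geq 2$; equivalently, at the vector level, $Pe^{(1)}$ has first coordinate $1$ while each $Pe^{(k)}$ with $k\geq 2$ has first coordinate $0$. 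With that repair the rest of your computation yields exactly the stated formula.
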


\begin{proof} Let  $G=\Phi(\mathcal{G})$.  Then  $G$ is
generated by  $\Phi(f_{1}),\dots,\Phi(f_{p})$.\ Apply Proposition
~\ref{p:10} to $G$, $A_{k}=\Phi(f_{k})$,
 $B_{k}=\Psi(f'_{k})\in\mathrm{g}^{1}$,  then  we have
$$\mathrm{g} =
\underset{k=1}{\overset{p}{\sum}}\mathbb{Z}\Psi(f'_{k}) +2i\pi
\mathbb{Z} \underset{k=1}{\overset{r}{\sum}} PJ_{k}P^{-1}.$$

We have
$\underset{k=1}{\overset{p}{\sum}}\mathbb{Z}\Psi(f'_{k})\subset\Psi(MA(n,
\mathbb{C}))$. Moreover, for every $k=2,\dots,r$, $J_{k}\in
\Psi(MA(n, \mathbb{C}))$, hence $PJ_{k}P^{-1}\in
\Psi(MA(n,\mathbb{C}))$, since $P\in \Phi(GA(n, \mathbb{C}))$.
 However,  $mPJ_{1}P^{-1}\notin \Psi(MA(n,\mathbb{C}))$ for every $m\in\mathbb{Z}\backslash\{0\}$, since $J_{1}$ has
  the form $J_{1}=\mathrm{diag}(1, J')$ where $J'\in M_{n}(\mathbb{C})$. As  $\mathrm{g}^{1}=\mathrm{g}\cap \Psi(MA(n, \mathbb{C}))$, then $mPJ_{1}P^{-1}\notin \mathrm{g}^{1}$ for every $m\in\mathbb{Z}\backslash\{0\}$.
 Hence we obtain:
$$\mathrm{g}^{1} =\left\{\begin{array}{c}
                       \underset{k=1}{\overset{p}{\sum}}\mathbb{N}\Psi(f'_{k}) +
\underset{k=2}{\overset{r}{\sum}} 2i\pi \mathbb{Z}PJ_{k}P^{-1}, \
if\ r\geq2 \ \ \  \\
                       \underset{k=1}{\overset{p}{\sum}}\mathbb{N}\Psi(f'_{k}), \ if\ r=1 \ \ \ \ \ \ \ \ \ \ \ \ \ \ \ \ \ \ \ \ \ \ \ \ \ \ \ \
                     \end{array}
\right.$$

Since $J_{k}u_{0}=e^{(k)}$, we get $$\mathrm{g}^{1}_{v_{0}}
=\left\{\begin{array}{c}
                       \underset{k=1}{\overset{p}{\sum}}\mathbb{N}\Psi(f'_{k})v_{0} +
\underset{k=2}{\overset{r}{\sum}} 2i\pi \mathbb{Z}Pe^{(k)}, \ if\
r\geq2 \ \ \ \ \\
                       \underset{k=1}{\overset{p}{\sum}}\mathbb{N}\Psi(f'_{k})v_{0}, \ if\ r=1 \ \ \ \ \ \ \ \ \ \ \ \ \ \ \ \ \ \ \ \ \ \ \ \ \ \ \ \
                     \end{array}
\right.$$

By Lemma~\ref{L:01234},(iii), one has
$\{0\}\times\mathfrak{q}_{w_{0}}=\mathrm{g}^{1}_{v_{0}}$ and
$\Psi(f'_{k})v_{0}=(0,\ f'_{k}(w_{0}))$, so
$\mathfrak{q}_{w_{0}}=p_{2}\left(\mathrm{g}^{1}_{v_{0}}\right)$.
It follows that $$\mathfrak{q}_{w_{0}}=\left\{\begin{array}{c}
                                     \underset{k=1}{\overset{p}{\sum}}\mathbb{N}f'_{k}(w_{0}) + \underset{k=2}{\overset{r}{\sum}} 2i\pi \mathbb{Z}p_{2}(Pe^{(k)}),\ if\ r\geq2 \\
                                     \underset{k=1}{\overset{p}{\sum}}\mathbb{N}f'_{k}(w_{0}) , \ if\ r=1\ \ \ \ \ \ \ \ \  \ \ \ \ \ \ \ \ \ \  \ \ \ \ \ \ \ \
                                   \end{array}
\right.$$ The proof is completed.
\end{proof}
\medskip

Recall the following proposition which was proven in \cite{mW}:
\\

\begin{prop}\label{p:12}$(cf.$ \cite{mW}, $page$  $35)$. Let $F = \mathbb{Z}u_{1}+\dots+\mathbb{Z}u_{p}$ with $u_{k} =
Re(u_{k})+i Im(u_{k})$, where $\mathrm{Re}(u_{k})$,
$\mathrm{Im}(u_{k})\in \mathbb{R}^{n}$, $k = 1,\dots, p$. Then $F$
is dense in $\mathbb{C}^{n}$ if and only if for every
$(s_{1},\dots,s_{p})\in
 \mathbb{Z}^{p}\backslash\{0\}$ :
$$\mathrm{rank}\left[\begin{array}{cccc }
 \mathrm{Re}(u_{1 }) &\dots &\dots & \mathrm{Re}(u_{p }) \\
   \mathrm{Im}(u_{1 }) &\dots &\dots & \mathrm{Im}(u_{p}) \\
  s_{1 } &\dots&\dots & s_{p }
 \end{array}\right] =\ 2n+1.$$
\end{prop}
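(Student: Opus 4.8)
The plan is to transport everything to $\mathbb{R}^{2n}$ and characterise density of a finitely generated subgroup via its annihilator. I would use the $\mathbb{R}$-linear isomorphism $\mathbb{C}^{n}\to\mathbb{R}^{2n}$, $u\mapsto\widetilde u:=(\mathrm{Re}(u),\mathrm{Im}(u))$, which is a homeomorphism and under which $F$ corresponds to $\Gamma:=\mathbb{Z}\widetilde u_{1}+\dots+\mathbb{Z}\widetilde u_{p}\subset\mathbb{R}^{2n}$, so that density of $F$ in $\mathbb{C}^{n}$ becomes density of $\Gamma$ in $\mathbb{R}^{2n}$. Thus it suffices to prove the equivalence for $\Gamma$, with $\widetilde u_{k}$ playing the role of the columns $(\mathrm{Re}(u_{k}),\mathrm{Im}(u_{k}))$ of the matrix in the statement, which I denote $M(s)$ (the $(2n+1)\times p$ matrix whose $k$-th column is $(\widetilde u_{k},s_{k})$).

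The main ingredient is the classical density criterion: a subgroup $\Gamma\subset\mathbb{R}^{m}$ is dense if and only if its annihilator $\Gamma^{\perp}:=\{\xi\in\mathbb{R}^{m}:\ \langle\xi,v\rangle\in\mathbb{Z}\ \text{for all}\ v\in\Gamma\}$ equals $\{0\}$. I would establish this from the structure theorem for closed subgroups of $\mathbb{R}^{m}$ together with the biduality $\overline\Gamma=(\Gamma^{\perp})^{\perp}$: if $\Gamma^{\perp}=\{0\}$ then $\overline\Gamma=\mathbb{R}^{m}$, and conversely $\overline\Gamma=\mathbb{R}^{m}$ forces $\Gamma^{\perp}=(\overline\Gamma)^{\perp}=\{0\}$ (characters are continuous, so vanishing on $\Gamma$ is the same as vanishing on $\overline\Gamma$, and the only $\xi$ pairing integrally with all of $\mathbb{R}^{m}$ is $0$). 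For our $\Gamma$ this reads $\Gamma^{\perp}=\{\xi\in\mathbb{R}^{2n}:\ \langle\xi,\widetilde u_{k}\rangle\in\mathbb{Z},\ k=1,\dots,p\}$.

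It then remains to prove the linear-algebra equivalence $\Gamma^{\perp}\ne\{0\}\Longleftrightarrow\exists\,s\in\mathbb{Z}^{p}\setminus\{0\}$ with $\mathrm{rank}\,M(s)<2n+1$. I would phrase both implications through the left kernel: $\mathrm{rank}\,M(s)<2n+1$ means there is a nonzero $(a,b)\in\mathbb{R}^{2n}\times\mathbb{R}$ with $a\cdot\widetilde u_{k}+b\,s_{k}=0$ for all $k$. For the forward direction, pick $0\ne\xi\in\Gamma^{\perp}$ and set $s_{k}:=\langle\xi,\widetilde u_{k}\rangle\in\mathbb{Z}$; then $(\xi,-1)$ lies in the left kernel, so if $s\ne0$ we are done, while if $s=0$ then $\xi$ is orthogonal to all $\widetilde u_{k}$, these do not span $\mathbb{R}^{2n}$, and $\mathrm{rank}\,M(s')\le2n$ for every $s'$, in particular for some $s'\ne0$. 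For the reverse direction, take $s\ne0$ and $(a,b)\ne0$ as above: if $b\ne0$ then $\xi:=-a/b$ satisfies $\langle\xi,\widetilde u_{k}\rangle=s_{k}\in\mathbb{Z}$, and $\xi\ne0$ (else $a=0$ and $b\,s_{k}=0$ would force $s=0$); if $b=0$ then $a\ne0$ is orthogonal to all $\widetilde u_{k}$, so $\mathbb{R}a\subset\Gamma^{\perp}$. In each case $\Gamma^{\perp}\ne\{0\}$.

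I expect the density-versus-annihilator step to be the only genuine obstacle; everything after it is the routine rank bookkeeping sketched above (tracking the cases $b=0$, $b\ne0$ and the side conditions $\xi\ne0$, $s\ne0$). If one wishes to avoid quoting Pontryagin duality, the same conclusion follows from the structure theorem alone: writing $\overline\Gamma=W\oplus L$ with $W$ a linear subspace and $L$ a lattice in a complementary subspace, a proper $\overline\Gamma$ admits a nonzero character of $\mathbb{R}^{2n}$ trivial on it, which is exactly a nonzero element of $\Gamma^{\perp}$. This is the single topological input the argument requires.
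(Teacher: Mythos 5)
Your argument is correct and complete in outline. Note first that the paper does not prove this proposition at all: it is quoted verbatim from Waldschmidt's lecture notes (page 35) and used as a black box, so there is no in-paper proof to compare against. What you supply is essentially the standard proof of this Kronecker-type criterion: identify $\mathbb{C}^{n}$ with $\mathbb{R}^{2n}$, invoke the duality statement that a subgroup $\Gamma\subset\mathbb{R}^{m}$ is dense if and only if $\Gamma^{\perp}=\{\xi:\langle\xi,v\rangle\in\mathbb{Z}\ \forall v\in\Gamma\}$ is trivial (which follows from the structure theorem for closed subgroups of $\mathbb{R}^{m}$, or from Pontryagin biduality), and then translate $\Gamma^{\perp}\neq\{0\}$ into the existence of some $s\in\mathbb{Z}^{p}\setminus\{0\}$ for which the $(2n+1)\times p$ matrix $M(s)$ has a nontrivial left kernel, i.e.\ rank $<2n+1$. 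Your case analysis is sound on both sides: in the forward direction the degenerate case $s=0$ is correctly handled by observing that the $\widetilde u_{k}$ then span a proper subspace so every $M(s')$ has rank at most $2n$; in the reverse direction the subcases $b\neq 0$ (yielding $\xi=-a/b\in\Gamma^{\perp}$, nonzero because $s\neq 0$) and $b=0$ (yielding $\mathbb{R}a\subset\Gamma^{\perp}$) are both closed off. The only genuinely non-elementary input is the density-versus-annihilator equivalence, which you correctly isolate and for which the structure theorem suffices. Since this is exactly the proof one finds in the cited source, your reconstruction is faithful rather than novel, but it is a valid self-contained justification of a statement the paper leaves to a reference.
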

\
\\
\\
\emph{Proof of Theorem ~\ref{T:2}:} This follows directly from
Theorem ~\ref{T:1}, Propositions ~\ref{p:11} and ~\ref{p:12}.
\
\\
\\
\emph{Proof of Corollary ~\ref{C:9}:} First, by Proposition
~\ref{p:12},  if $F = \mathbb{Z}u_{1}+\dots +\mathbb{Z}u_{m}$,
$u_{k}\in \mathbb{C}^{n}$ with $m \leq 2n$, then $F$ cannot be
dense in $\mathbb{C}^{n}$. Now, by the form of
$\mathfrak{q}_{w_{0}}$ in Proposition~\ref{p:11},
$\mathfrak{q}_{w_{0}}$ cannot be dense in $\mathbb{C}^{n}$ and so
 Corollary ~\ref{C:9} follows by Theorem ~\ref{T:2}.  $\hfill{\Box}$
\
\\
\\
\emph{Proof of Corollary ~\ref{C:10}:} Since  $n\leq 2n-r+1$
(because $r\leq n+1$), Corollary ~\ref{C:10}  follows from
Corollary ~\ref{C:9}. $\hfill{\Box}$

\medskip
\section{ Example}
\bigskip

\begin{exe} \label{exe:2} Let  $\mathcal{G}$\ the sub-semigroup of $GA(2, \mathbb{C})$ generated by
$f_{1}=(A_{1},a_{1})$, $f_{2}=(A_{2},a_{2})$,
$f_{3}=(A_{3},a_{3})$ and $f_{4}=(A_{4},a_{4})$ where
$A_{1}=I_{2}, \ \ a_{1}=(1+i, 0)$,\

$A_{2}=\mathrm{diag}(1, e^{-2+i}),
 \  a_{2}=\left(0, \ 0\right),$ $A_{3} =\mathrm{diag}\left(1,\
e^{\frac{-\sqrt{2}}{\pi}+i\left(\frac{\sqrt{2}}{2\pi}-\frac{\sqrt{7}}{2}\right)}\right)$,
\

  $a_{3} =\left(\frac{-\sqrt{3}}{2\pi}+i\left(\frac{\sqrt{5}}{2}-\frac{\sqrt{3}}{2\pi}\right),0\right),$
$A_{4}=I_{2}, \ \ \  a_{4}=(2i\pi,\ 0).$

Then $\mathcal{G}$ is hypercyclic.
\end{exe}

\begin{proof} First one can check that $\mathcal{G}$ is abelian: $f_{i}f_{j}=f_{j}f_{i}$ for every $i,j=1,2,3,4$. Denote by  $G=\Phi(\mathcal{G})$.
 Then  $G$ is generated by $$\Phi(f_{1})=\left[\begin{array}{ccc}
 1 & 0 & 0 \\
 1+i & 1 & 0\\
  0 & 0 & 1
  \end{array}
\right],\  \Phi(f_{2})=\left[\begin{array}{ccc}
 1 & 0 & 0 \\
 0 & 1 & 0\\
 0 & 0 & e^{-2+i}
 \end{array}
\right],$$ \ $$\Phi(f_{3})=\left[\begin{array}{ccc}
                               1 & 0 & 0 \\
                               \frac{-\sqrt{3}}{2\pi}+i\left(\frac{\sqrt{5}}{2}-\frac{\sqrt{3}}{2\pi}\right) & 1 & 0 \\
                               0 & 0 & e^{\frac{-\sqrt{2}}{\pi}+i\left(\frac{\sqrt{2}}{2\pi}-\frac{\sqrt{7}}{2}\right)}
                              \end{array}
\right],\ \Phi(f_{4})=\left[\begin{array}{ccc}
                               1 & 0 & 0 \\
                              2i\pi & 1 & 0 \\
                               0 & 0 & 1
                             \end{array}
\right].$$

Let $f'_{i}=(B_{i},b_{i})$, $i=1,2,3,4$ where
 $$B_{1}=\mathrm{diag}(0,\ 0)=0, \ \  b_{1}=(1+i,\ 0),$$
$$B_{2}=\mathrm{diag}(0, \ -2+i),\ \ \ \ b_{2}=(0,\ 0),$$
$$B_{3}=\mathrm{diag}\left(0,\ \ \
   \frac{-\sqrt{2}}{\pi}+i\left(\frac{\sqrt{2}}{2\pi}-\frac{\sqrt{7}}{2}\right)\right),\ \  \ \ b_{3}=\left(\frac{-\sqrt{3}}{2\pi}+i\left(\frac{\sqrt{5}}{2}-\frac{\sqrt{3}}{2\pi}\right),\ 0\right),$$
    $$B_{4}=\mathrm{diag}(0,0)=0, \ \ b_{4}=(2i\pi, \ 0).$$
    Then we have $e^{\Psi(f'_{i})}=\Phi(f_{i})$, $i=1,2,3,4$.
\
\\
Here $r=2$, $\eta=(2,1)$, $G$ is an abelian sub-semigroup of
$\mathcal{K}_{(2,1),2}^{*}(\mathbb{C})$.\ We have  $P=I_{3}$,
$\varphi=(I_{2},0)$, $u_{0}=v_{0}=(1,0,1)$, $e^{(2)}=(0,0,1)$ and
$w_{0}=(0,1)$.\ By Proposition ~\ref{p:11}, $\mathfrak{q}_{w_{0}}=
\underset{k=1}{\overset{4}{\sum}}\mathbb{Z}f'_{k}(w_{0}) +  2i\pi
\mathbb{Z}p_{2}(e^{(2)})$. On the other hand, for every
$(s_{1},s_{2}, s_{3}, s_{4},
t_{2})\in\mathbb{Z}^{5}\backslash\{0\}$, write $$M_{(s_{1}, s_{2},
s_{3}, s_{4}, t_{2})}=\left[\begin{array}{ccccc }
 \mathrm{Re}(B_{1}w_{0}+b_{1}) & \mathrm{Re}(B_{2}w_{0}+b_{2}) & \mathrm{Re}(B_{3}w_{0}+b_{3}) & \mathrm{Re}(B_{4}w_{0}+b_{4}) & 0 \\
 \\
 \mathrm{Im}(B_{1}w_{0}+b_{1}) & \mathrm{Im}(B_{2}w_{0}+b_{2}) & \mathrm{Im}(B_{3}w_{0}+b_{3}) & \mathrm{Im}(B_{4}w_{0}+b_{4}) & 2\pi e^{(2)} \\
 \\
  s_{1 } & s_{2 } & s_{3 } & s_{4 } & t_{2 }
  \end{array}\right].$$ Then the
determinant:

\begin{align*}
\Delta & = \mathrm{det}\left(M_{(s_{1}, s_{2}, s_{3}, s_{4},
t_{2})}\right)
\\ \ & =\left|\begin{array}{ccccc }
  1 & 0 & -\frac{\sqrt{3}}{2\pi } & 0 & 0 \\
  0 & -2 &  -\frac{\sqrt{2}}{\pi } & 0 & 0 \\
   1 & 0 & \frac{\sqrt{5}}{2}-\frac{\sqrt{3}}{2\pi } & 2\pi & 0 \\
   0 & 1 & \frac{\sqrt{2}}{2\pi}-\frac{\sqrt{7}}{2 } & 0 & 2\pi \\
  s_{1 } & s_{2 } & s_{3 } & s_{4 } & t_{2 }
  \end{array}\right|\\
  \ & =2\pi\left(-s_{1}\sqrt{3}+2s_{2}\sqrt{2}-4s_{3}\pi+s_{4}\sqrt{5}-t_{2}\sqrt{7}\right).
  \end{align*}

Since $\pi$, $\sqrt{2}$, $\sqrt{3}$, $\sqrt{5}$ and $\sqrt{7}$ \ \
are rationally independent,  $\Delta\neq 0$ for every
 $(s_{1}, s_{2}, s_{3}, s_{4}, t_{2})\in\mathbb{Z}^{5}\backslash\{0\}$. It follows that $\mathrm{rank}\left(M_{(s_{1}, s_{2}, s_{3}, s_{4}, t_{2})}\right)=\ 5.$
  Hence $f_{1},\dots,f_{4}$ satisfy the  property $\mathcal{D}$. By Theorem ~\ref{T:2}, $\mathcal{G}$ is hypercyclic.
\end{proof}

\bibliographystyle{amsplain}

\providecommand{\bysame}{\leavevmode\hbox to3em{\hrulefill}\thinspace}
\providecommand{\MR}{\relax\ifhmode\unskip\space\fi MR }
\providecommand{\MRhref}[2]{%
  \href{http://www.ams.org/mathscinet-getitem?mr=#1}{#2}
}
\providecommand{\href}[2]{#2}
\begin{thebibliography}{}

\end{thebibliography}


\begin{thebibliography}{10}

\bibitem{AAFF} A. Ayadi, \emph{Hypercyclic abelian groups of affine maps on $\mathbb{C}^{n}$}, Canad. Math. Bull.
Vol. 56 (3), (2013) pp. 477–490.

\bibitem{aAh-M05} A. Ayadi and H. Marzougui, \emph{Dense orbits for abelian subgroups of
GL(n, C)}, Foliations 2005: World Scientific, Hackensack, NJ,
(2006), 47--69.

\bibitem{aA-Hm12} A. Ayadi and H.marzougui \emph{Abelian semigroups of matrices on $\mathbb{C}^{n}$ and hypercyclicity},
  Proceedings of the Edinburgh Mathematical Society, available on CJO. doi:10.1017/S0013091513000539.


\bibitem{AMY} A. Ayadi, H. Marzougui and Y. N'dao, \emph{On the dynamic of abelian groups of affine maps on
 $\mathbb{C}^{n}$ and $\mathbb{R}^{n}$}, Preprint ICTP, IC /2009/062 (2009).

\bibitem{bm} F. Bayart, E. Matheron, Dynamics of Linear Operators, Cambridge Tracts in Math., 179, Cambridge University Press, (2009).

\bibitem{Ja} M. Javaheri, \emph{A generalization of Dirichlet approximation theorem
for the affine actions on real line}, Journal of Number Theory
\textbf{128} (2008) 1146-1156.


\bibitem{Ku} R.S. Kulkarni, \emph{Dynamics of linear and affine maps}, Asian J.Math.\textbf{12},no 3, (2008), 321-344.


\bibitem{be} V. Bergelson, M. Misiurewicz and S. Senti,  \emph{Affine actions of a free semigroup on the real line}
Ergod. Th. and Dynam. Sys. \textbf{26}, (2006), 1285--1305.

\bibitem{mW} Waldschmidt.M, \emph{Topologie des points rationnels},
Cours de troisi\`eme Cycle, Universit\'e P. et M. Curie (Paris
VI), (1994/95).

\end{thebibliography}
\vskip 0,4 cm

\end{document}